\newtheorem{theorem}{Theorem}[section]
\newtheorem{lemma}[theorem]{Lemma}
\theoremstyle{definition}
\newtheorem{definition}[theorem]{Definition}
\theoremstyle{corollary}
\newtheorem{corollary}[theorem]{Corollary}
\theoremstyle{example}
\newtheorem{example}[theorem]{Example}
\theoremstyle{note}
\newtheorem{note}[theorem]{Note}
\theoremstyle{remark}
\newtheorem{remark}[theorem]{Remark}
\numberwithin{equation}{section}
\newcommand{\bs}{\backslash}
\newcommand{\imply}{\longrightarrow}
\newcommand{\X}[1]{spec \ {#1}}
\begin{document}

\title{Study of spectrum of certain subrings of a commutative ring with identity}

\author{Biswajit Mitra }

\address{Department of Mathematics. The University of Burdwan, Burdwan 713104, West Bengal, India }

\email{b1mitra@math.buruniv.ac.in}

\author{Debojyoti Chowdhury}

\address{Department of Mathematics. The University of Burdwan, Burdwan 713104, West Bengal, India}
\email{sankha.sxc@gmail.com}

\author{Sanjib Das }

\address{Department of Mathematics. The University of Burdwan, Burdwan 713104, West Bengal, India}
\email{ruin.sanjibdas893@gmail.com}
\subjclass[2010]{Primary 13AXX; Secondary 13A15, 13A99, 54H10}
\keywords{Commutative ring with unity, pm rings, rings of continuous functions, spectrum, maximal spectrum.}

\begin{abstract}
By a ring we always mean a commutative ring with identity. It is well known that maximal spectrum of $C(X)$, $C^*(X)$ and any intermediate subrings between $C(X)$ and $C^* (X)$ are homeomorphic and homeomorphic with $\beta X$, the Stone-$\check{C}$ech compactification of $X$. In this paper we generalized these results to an arbitrary ring by introducing a notion of dense subring. We proved that if $A$ is completely normal and dense subring of $B$, then maximal spectrum of $A$ and $B$ are homeomorphic and hence maximal spectrum of all intermediate subrings between $A$ and $B$ where $A$ is dense, are homeomorphic. We also proved that $A$ is dense subring of $B$ if and only if spectrum of $B$ is densely embedded in spectrum of $A$ and have further shown that if $A$ is dense subring of $B$, any minimal prime ideal of $A$ is precisely of the form $Q\cap A$ for some unique minimal prime ideal $Q$ of $B$. As a consequence, we concluded that if $A$ is dense in $B$,  minimal spectrum of $A$ and that of $B$ are homeomorphic. We also studied different properties of dense subrings of a ring.
\end{abstract}

\maketitle

\section{Introduction} Throughout the paper, term ring denotes a commutative ring with identity, to be denoted usually by $1$, all subrings of a given ring contain the identity of whole ring and all ring homomorphisms preserve identities. In commutative ring theory one has a functorial construction that corresponds a ring with its prime spectrum which is the space of all prime ideals of the ring with Zarisky topology. One further associates a ring with the subspace of prime spectrum, called maximal spectrum, consisting of all maximal ideals with the Zarisky topology restricted on it. However this construction is not in general functorial. But within the class of pm-rings, that is the rings where every prime ideal can be extended to unique maximal ideal, the construction turns out to be functorial as follows from \cite{MO71}, theorem 1.5.  Now for any space $X$, let $C(X)$ denotes the family of all real-valued continuous functions on $X$ and $C^*(X)$, the subring of all bounded real-valued continuous functions. Any subring of $C(X)$ containing $C^*(X)$ is referred as intermediate subring. Maximal spectrum of $C(X)$ (respectively $C^*(X)$) are usually referred as structure space of $C(X)$ (respectively $C^* (X)$). From \cite{GJ60}, it is well known that the structure space of $C(X)$ and structure space of $C^* (X)$ are homeomorphic and homeomorphic with $\beta X$, the Stone-$\check{C} $ech compactification of $X$. More generally structure space of any two intermediate subrings of $C(X)$ are homeomorphic. However relation among spectrum of $C(X)$ and that of $C^*(X)$ has been explored by Macro and Orsatti in \cite{MO71}. They proved that spectrum of $C^* (X)$ is embedded in spectrum of $C(X)$. The way they had proved the result, can immediately be generalized if $B$ is a ring of fraction of $A$.  In this paper we proved, infact in more general framework, that spectrum of $C(X)$ is not only embedded in that of $C^* (X)$ but also as a dense subspace and further observed that complete normality of spectrum of $C^*(X)$ is directly responsible for spectrum of $C(X)$ to be completely normal and even more, causing their structure spaces homeomorphic. We, infact, don't need to directly involve in manipulating structure of maximal ideals of $C(X)$ and $C^*(X)$ to establish the homeomorphism only. However it is required whence to show their structure spaces homeomorphic with $\beta X$. In this paper we validated the extent of these results to an arbitrary ring via introducing a notion, referred as `dense subring'.

In section 3, we introduced the notion of dense subring of a ring and studied different properties of such subrings. In section 4, we first established that a subring $A$ is dense in $B$ if and only if spectrum of $B$ is densely embedded in spectrum of $A$. We defined a subring $A$  of a ring $B$ to be weak completely normal with respect to  $B$  if  for any two distinct maximal ideal $M \neq M'$,  whenever $M \cap A$ and $M' \cap A$ are non comparable, $cl_{\X{A}} \{M \cap A\} \cap cl_{\X{A}} \{M' \cap A\} = \emptyset$. A ring $A$ is called weak completely normal if $A$ is weak completely normal with respect to any ring containing it. We have shown that if $A$ is pm-subring of $B$, then maximal spectrum of $B$ is homeomorphic with that of $A$ if and only if  $A$ is dense and weak completely normal subring with respect to $B$. As we have proved that a completely normal ring (i.e. $\X{A}$ is completely normal), is weak completely normal and $C^* (X)$ is dense and completely normal subring of $C(X)$, maximal spectrum or structure space of $C^*(X)$, is homeomorphic with that of $C(X)$.     \\

We further proved that maximal spectrum of any two subrings between $A$ and $B$, to be referred as intermediate subrings, are homeomorphic, provided $A$ is dense and weak completely normal, with respect to those intermediate subrings and justified that $C^*(X)$ is dense and completely normal with respect to any intermediate subrings of $C(X)$ and hence their structure spaces are homeomorphic with that of $C(X)$. Finally we have shown that if $A$ is dense in $B$, then space of minimal prime ideal of $A$ under Zarisky topology is homeomophic with that of $B$.

\section{Preliminaries}
Let $A$ be a ring. $\X{A}$ denotes the family of all prime ideals with Zariski topology. Then $\X{A}$ is a compact $T_0$ space [\cite{AM},Chapter 1, Excercise 15]. $\{X^A_f: f \in A\}$ where $X^A_f = \{P \in \X{A}: f \notin P\}$, is the family of all basic open sets in $\X{A}$. Let $V^A_f = \X{A} \bs X^A_f$ is the basic closed sets for each $f\in A$ in $\X{A}$.  Sometimes we write $X_f$ instead of $ X_f^A$, if there is no confusion about the underlying ring $  A$. Let $max \ (A) $ denotes the family of all maximal ideals in $A$. Clearly $max \ (A) \subseteq \X{A}$. The $max \ (A)$ with restricted Zarisky topolgy is called maximal spectrum of $A$. It is well known [\cite{AM},Chapter 1, Excercise 15], that, maximal spectrum is compact $T_1$ space. However it is not in general $T_2$. It is $T_2$ if and only if the ring $A$ satisfies the following condition  [\cite{GJ60}, Chapter 7. Excercise 7M].

For any two distinct maxcimal ideals $M, M'$ of $A$, there exist $a\notin M, a'\notin M'$ so that $aa' \in jac \ A$ where $jac \ A$ is the Jacobson radical of $A$, i.e, it is the intesection of all maximal ideals of $A$.

For each $M \in max \ (A)$, let $\mathcal{P}_M$ denotes the family of all prime ideals contained in $M$ and $O_M = \cap \mathcal{P}_M$. $O_M$ is in general need not be prime. An alternate description of $O_M$ is given in the paper of Marco and Orsatti in \cite{MO71}.

Further $O_M$ can be expressed as intersection of minimal prime ideal follows from [\cite{AM}, Chapter I, Excercise 20].

In general spectrum of a ring is very bad with respect to separation axioms, however with the class of pm-rings it behaves nicely. A pm-ring is a ring in which every prime ideal is contained in a unique maximal ideal. Let $A$ be a pm-ring and $\mu_A : spec \ A \to max (A)$ be the map defined by $\mu_A (P)$ to be the unique maximal ideal containing $P$.  Marco and Orsatti described the following characterization of pm-rings [\cite{MO71}, Theorem 1.2]

\begin{theorem} \label{2.1}
Let $A$ be a ring. The followings are equivalent.

(a) $A$ is pm-ring

(b) $max \ (A)$ is a retract of $spec \ A$ under the retraction $\mu_A$.

(c) For each maximal ideal $M$ of $A$, $M$ is the unique maximal ideal containing $O_M$ (i.e $\mathcal{P}_M$ is closed in $\mathcal{P})$.

(d) $spec \ A$ is normal.

Furthermore, if (a) holds, the map $\mu_A$ is the unique retraction of $spec \ A$ onto $max(A)$ and $ max (A)$ is $T_2$.
\end{theorem}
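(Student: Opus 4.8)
\medskip

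\noindent\textbf{Proof proposal.} The plan is to keep $(a)$ as a hub and prove it equivalent to each of $(b)$, $(c)$, $(d)$ in turn, reading off the two supplementary assertions along the way. Half of the links are formal; the real content is the passage from the bare pm-hypothesis to the topological regularity of $spec\ A$ and $max\ (A)$, and I would isolate that passage in a single place.

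The formal directions first. $(b)\Rightarrow(a)$: the mere assertion that $\mu_A\colon spec\ A\to max\ (A)$ is a well-defined map says that every prime lies in a unique maximal ideal, i.e.\ that $A$ is pm. $(c)\Rightarrow(a)$: if a prime $P$ lay in distinct maximal ideals $M,M'$, then $O_M=\bigcap\mathcal{P}_M\subseteq P\subseteq M'$ would exhibit $M'$ as a maximal ideal containing $O_M$, forcing $M'=M$. $(d)\Rightarrow(a)$: if a prime $P$ lay in $M\neq M'$, the closed points $\{M\},\{M'\}$ of $spec\ A$, being disjoint, would be separated by disjoint open sets of $spec\ A$ by normality, whose traces on $V(P)=\overline{\{P\}}$ are disjoint nonempty opens; but $V(P)$ is homeomorphic to $spec(A/P)$ and hence irreducible, so two nonempty opens of it must meet — a contradiction.

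For the core I would treat $(a)\Rightarrow(c)$ and $(a)\Rightarrow(b)$ together, via the patch (constructible) topology of $spec\ A$. The set $\mathcal{P}_M=\bigcap_{a\notin M}X^A_a$ is an intersection of quasi-compact opens, hence pro-constructible; and when $A$ is pm it is stable under specialisation, since $P\subseteq P'$ forces $\mu_A(P')=\mu_A(P)=M$, hence $P'\subseteq M$. Since a subset of the spectral space $spec\ A$ is closed precisely when it is pro-constructible and stable under specialisation, $\mathcal{P}_M$ is closed; equivalently $\mathcal{P}_M=\overline{\mathcal{P}_M}=V(O_M)$, so $M$ is the only maximal ideal containing $O_M$ — that is $(c)$. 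In the same way a basic closed set of $max\ (A)$ is some $V^A_f\cap max\ (A)$, and $\mu_A^{-1}\!\bigl(V^A_f\cap max\ (A)\bigr)=\{P:f\in\mu_A(P)\}$, whose complement $\{P:f\ \text{is a unit modulo}\ P\}=\bigcup_{b\in A}V^A_{fb-1}$ is ind-constructible; hence $\{P:f\in\mu_A(P)\}$ is pro-constructible, and being again specialisation-stable it is closed. Thus $\mu_A$ is continuous, and since it restricts to the identity on $max\ (A)$ it is a retraction — that is $(b)$. (Alternatively, following De Marco--Orsatti, these arguments can be run through the explicit description of $O_M$ recorded in the Preliminaries.)

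It remains to deduce $(d)$ and the supplements. With $\mu_A$ continuous, $max\ (A)=\mu_A(spec\ A)$ is quasi-compact, and being a $T_1$ retract of the spectral space $spec\ A$ it is itself spectral, hence Boolean, hence compact Hausdorff; in particular $max\ (A)$ is $T_2$. Then $(a)\Rightarrow(d)$: for disjoint closed sets $V(I_1),V(I_2)$ of $spec\ A$ one checks $\mu_A\bigl(V(I_i)\bigr)=V(I_i)\cap max\ (A)$, which are disjoint closed — hence compact — subsets of the normal space $max\ (A)$; separating them there by disjoint open sets $W_1,W_2$ and pulling back yields disjoint open sets $\mu_A^{-1}(W_i)\supseteq V(I_i)$. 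Finally, any retraction $r\colon spec\ A\to max\ (A)$ is constant on each $V(P)$, its image lying in $\overline{\{r(P)\}}=\{r(P)\}$; since $\mu_A(P)\in V(P)$ is a maximal ideal and $r$ fixes maximal ideals, $r(P)=r(\mu_A(P))=\mu_A(P)$, so $\mu_A$ is the unique retraction. The genuine obstacle is exactly this core — turning ``pm'' into ``$\mathcal{P}_M$ closed / $\mu_A$ continuous / $spec\ A$ normal''; every naive attempt to separate disjoint closed subsets of $spec\ A$ by enlarging them reproduces closed, not open, sets, so something structural about pm-rings is unavoidable, be it the constructibility argument above or the description of $O_M$ together with the quasi-compactness of $max\ (A)$.
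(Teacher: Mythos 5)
The paper itself contains no proof of this statement: Theorem \ref{2.1} is quoted verbatim from De Marco--Orsatti (\cite{MO71}, Theorem 1.2), so your argument can only be compared with the classical one. Your hub-and-spokes decomposition is sound, and almost every step checks out: the formal implications (b)$\Rightarrow$(a), (c)$\Rightarrow$(a) and (d)$\Rightarrow$(a) (the last via irreducibility of $V(P)\cong spec\ (A/P)$) are correct, and the core (a)$\Rightarrow$(c),(b) via Hochster's criterion (a subset of a spectral space is closed iff it is pro-constructible and specialisation-stable) is also correct, including the identification of $spec\ A\smallsetminus\mu_A^{-1}\bigl(V^A_f\cap max\,(A)\bigr)$ with $\bigcup_{b\in A}V^A_{fb-1}$, which rightly uses that $A/P$ is local when $A$ is pm. This is a genuinely different route from De Marco--Orsatti, who argue elementarily through the explicit description of $O_M$ (namely $a\in O_M$ iff $sa^n=0$ for some $s\notin M$) and the relation $O_M+O_{M'}=A$; you trade that bookkeeping for standard spectral-space machinery, which is a fair exchange. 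The uniqueness of the retraction and the pull-back proof of (a)$\Rightarrow$(d) are fine once $max\,(A)$ is known to be Hausdorff (only $T_1$ plus normality of $max\,(A)$ are actually used there).

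The one genuine weak point is exactly where you locate the ``genuine obstacle'': Hausdorffness of $max\,(A)$. You derive it from the assertion that a ($T_1$) retract of a spectral space is again spectral, hence Boolean. That closure property of spectral spaces under continuous retracts is a substantial fact which you neither prove nor reference, and both the $T_2$ supplement and your step (a)$\Rightarrow$(d) hang on it; as written this is a gap. It is easily repaired without any such machinery. Algebraically: if $M\neq M'$ in $max\,(A)$ had no disjoint neighbourhoods, then for all $a\notin M$, $a'\notin M'$ some maximal ideal would miss both $a$ and $a'$; hence $S=\{aa':a\notin M,\ a'\notin M'\}$ is a multiplicatively closed set with $0\notin S$, so some prime $P$ is disjoint from $S$, and taking $a'=1$ (resp.\ $a=1$) gives $P\subseteq M$ and $P\subseteq M'$, contradicting (a). Topologically, in your own language: if $M,M'$ cannot be separated, the quasi-compact open neighbourhoods of $M$ and of $M'$ in $spec\ A$ form a filter base of patch-clopen sets, so by patch-compactness they have a common point $z$; then $M,M'\in cl\{z\}=V(z)$, i.e.\ the prime $z$ lies in both $M$ and $M'$, again contradicting (a). With either patch inserted, the rest of your proof stands.
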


The following lemma [\cite{MO71}, lemma 1.4] has been very useful in this paper. Infact we have completely characterized the following lemma in more general framework in the next section.

\begin{lemma}\label{2.2}
Let $A$ a be subring of a ring $B$. For every $Q \in spec \ A$, there exists $P \in spec \ B$ such that  $P\cap A \subseteq Q$.
\end{lemma}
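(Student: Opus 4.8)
The plan is to realize $P$ as the contraction to $B$ of a maximal ideal of an appropriate localization of $B$. Put $S = A \setminus Q$. Since $Q$ is a proper prime ideal of $A$ we have $1 \in S$, and $S$ is closed under multiplication, because $ab \notin Q$ whenever $a \notin Q$ and $b \notin Q$; moreover $0 \in Q$, so $0 \notin S$. Viewing $S$ as a subset of $B$, it is thus a multiplicative subset of $B$ with $0 \notin S$.

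Next I would use the standard fact that a multiplicative set in a ring avoiding $0$ is disjoint from some prime ideal. Concretely, form the localization $S^{-1}B$; since $0 \notin S$ this ring is nonzero, hence possesses a maximal ideal $\mathfrak{m}$, and the contraction $P := \varphi^{-1}(\mathfrak{m})$ along the canonical homomorphism $\varphi : B \to S^{-1}B$ is a prime ideal of $B$ satisfying $P \cap S = \emptyset$. Equivalently, one may apply Zorn's lemma to the nonempty family of ideals of $B$ disjoint from $S$ and check that a maximal element of this family is prime.

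It then remains only to intersect with $A$: from $P \cap S = \emptyset$ and $S = A \setminus Q$ we obtain $P \cap A \subseteq A \setminus S = Q$, which is the assertion. I do not expect a genuine obstacle here; the only points needing attention are verifying that $S = A\setminus Q$ is genuinely a multiplicative subset of the larger ring $B$ (and not merely of $A$) and invoking the choice-dependent existence of a prime ideal of $B$ missing $S$. As indicated in the text, this lemma will be refined considerably in the next section, where the correspondence $Q \mapsto P$ is analysed more closely.
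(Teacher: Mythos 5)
Your proof is correct, but it takes a different (and more direct) route than the paper. The paper never argues the lemma on its own: it deduces it as the special case $f=i$ of the more general equivalence proved at the start of Section 3, where the conditions (kernel of $f$ inside the nilradical of $A$, density of $f^*(spec \ B)$ in $spec \ A$, and the contraction statement) are shown equivalent; there the implication from density to the contraction statement is proved by choosing, for each $a \in A \setminus Q$, a prime $Q_a$ of $B$ with $a \notin f^*(Q_a)$, forming the ideal $I = \bigcap_a Q_a$, and then enlarging $I$ to a prime of $B$ disjoint from the multiplicatively closed set $f(A \setminus Q)$. You short-circuit all of this: since $A$ is a subring, $S = A\setminus Q$ is itself a multiplicatively closed subset of $B$ with $0 \notin S$, so a prime $P$ of $B$ disjoint from $S$ exists outright (via $S^{-1}B \neq 0$ or the Zorn argument you mention), and $P \cap A \subseteq Q$ follows immediately. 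Both arguments ultimately rest on the same multiplicative-avoidance principle, but yours dispenses with the density input from Atiyah--Macdonald and with the auxiliary intersection of primes, yielding a shorter, self-contained proof of the stated lemma (essentially the original Marco--Orsatti argument); the paper's longer detour buys the stronger statement for an arbitrary homomorphism whose kernel lies in the nilradical, together with the density formulation that is exploited later in Section 4.
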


The following theorem [\cite{MO71}, Theorem 1.6] is the key result, applied in this paper to investigate homeomorphism between maximal spectrum of $A$ and $B$.

\begin{theorem}\label{2.3}
Let $B$ be a ring, $A$ a subring of $B$. Assume that $A$ is pm-ring. Then the map $\lambda: max(B) \to max (A)$ which sends every maximal ideal $M$ of $B$  into the unique maximal ideal of $A$ containing $ M \cap A$ is a continuous closed map from $\max (B)$ onto $max ( A)$; $\lambda$ is a homeomorphism  if and only if $M, M' \in max \ B$ and $M \neq M'$ imply $M \cap A + M'\cap A = A$.
\end{theorem}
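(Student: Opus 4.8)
The plan is to establish the asserted properties of $\lambda$ one at a time, each time reducing to the pm-hypothesis on $A$ together with Lemma~\ref{2.2} and Theorem~\ref{2.1}. First I would record that $\lambda$ is well defined and is the restriction to $\max(B)$ of a continuous map on all of $\X{B}$. Indeed, for $M\in\max(B)$ the contraction $M\cap A$ is a prime ideal of $A$ (and proper, since $1\notin M$), hence lies in a unique maximal ideal of $A$ by the pm-property, and $\lambda(M)=\mu_A(M\cap A)$. Thus $\lambda=\bar\lambda|_{\max(B)}$ where $\bar\lambda:=\mu_A\circ\iota^{*}\colon\X{B}\to\max(A)$ and $\iota^{*}$ is the contraction map along $A\hookrightarrow B$. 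Since $\iota^{*}$ is continuous (the preimage of $V^A_f$ is $V^B_f$) and $\mu_A$ is continuous (Theorem~\ref{2.1}(b)), $\bar\lambda$ is continuous, and therefore so is $\lambda$.

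For surjectivity, fix $N\in\max(A)$. By Lemma~\ref{2.2} applied to $N\in\X{A}$ there is $P\in\X{B}$ with $P\cap A\subseteq N$; enlarge $P$ to a maximal ideal $M$ of $B$. The useful remark here is that in a pm-ring any two comparable prime ideals lie in the same maximal ideal: as $P\cap A\subseteq M\cap A$ and $P\cap A$ sits in a unique maximal ideal, $\mu_A(M\cap A)=\mu_A(P\cap A)$, and this equals $N$ since $N$ is a maximal ideal containing $P\cap A$. Hence $\lambda(M)=N$.

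That $\lambda$ is a closed map I would obtain from a soft compactness argument rather than by manipulating ideals: $\max(B)$ is compact, so a closed $Z\subseteq\max(B)$ is compact, $\lambda(Z)$ is compact by continuity, and $\max(A)$ is Hausdorff because $A$ is a pm-ring (last sentence of Theorem~\ref{2.1}); a compact subset of a Hausdorff space is closed. A continuous closed bijection is automatically a homeomorphism, its inverse being continuous precisely because $\lambda$ is closed. Hence $\lambda$ is a homeomorphism if and only if it is injective.

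Finally I would match injectivity with the comaximality condition. If $M\cap A+M'\cap A=A$ whenever $M\neq M'$ in $\max(B)$, then no maximal ideal of $A$ contains both $M\cap A$ and $M'\cap A$, so $\lambda(M)=\mu_A(M\cap A)\neq\mu_A(M'\cap A)=\lambda(M')$; conversely, if some $M\neq M'$ had $M\cap A+M'\cap A$ proper, that sum would lie in a maximal ideal $N$ of $A$, and uniqueness would force $\lambda(M)=N=\lambda(M')$, contradicting injectivity. I expect the only genuinely non-formal step is surjectivity, where one must combine Lemma~\ref{2.2} with the observation that comparable primes of a pm-ring share their maximal ideal; continuity, closedness, and the homeomorphism criterion are then formal consequences of the compactness of $\max(B)$, the Hausdorffness of $\max(A)$, and the uniqueness built into the pm-property.
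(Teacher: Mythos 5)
Your proof is correct: continuity via $\mu_A\circ i^*$ (Theorem~\ref{2.1}), surjectivity from Lemma~\ref{2.2} plus the uniqueness in the pm-property, closedness from compactness of $\max(B)$ and Hausdorffness of $\max(A)$, and the equivalence of injectivity with the comaximality condition all check out. The paper itself gives no proof of this statement (it is quoted from Marco--Orsatti), and your argument is essentially the standard one from that source, so there is nothing to flag.
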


Schwartz and Tressl in [\cite{ST10}, theorem 4.3,  theorem 5.1] have shown  lists of characterizations of spectrum of a ring to be normal,i.e, that of pm-rings. Moreover in the same paper, they characterized completely normal spectrum of a ring. For simplicity, we shall refer a ring to be completely normal if their respective spectrum is so.

Infact in [\cite{ST10}, corollary 6.4], they proved the following result which assures that homomorphic image of a completely normal ring is also completely normal.

\begin{theorem}\label{2.4}
Let $A$ be a ring. $\X{A}$ is completely normal if and only if for all $s,a \in A$, there are $x,x' \in A$ and $k \in \mathbb{N}$ such that

$(s^k - xsa)(s^k - x'(s^2 - sa)) = 0$
\end{theorem}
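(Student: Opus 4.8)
The plan is to reduce the statement, by localizing $A$ at one element at a time, to the algebraic description of \emph{normal} spectral spaces (equivalently, of pm-rings, via Theorem~\ref{2.1}), and then to clear denominators. This rests on two facts. The first is an elementary characterization of pm-rings which I would prove directly: \emph{a ring $R$ is a pm-ring if and only if for every $c\in R$ there are $y,y'\in R$ with $(1-yc)(1-y'(1-c))=0$}. For ``only if'': $V^R_c$ and $V^R_{1-c}$ are disjoint closed subsets of $\X R$ (a common prime would contain $c+(1-c)=1$), hence, $\X R$ being normal, lie in disjoint open sets; since $V^R_c$ and $V^R_{1-c}$ are constructible and therefore quasi-compact, these may be shrunk to quasi-compact opens $U\supseteq V^R_c$, $U'\supseteq V^R_{1-c}$ with $U\cap U'=\emptyset$. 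Writing $U=\X R\setminus\{P: I\subseteq P\}$ and $U'=\X R\setminus\{P: J\subseteq P\}$ with $I,J$ finitely generated, $U\supseteq V^R_c$ gives $1-\lambda c\in I$ for some $\lambda$, similarly $1-\mu(1-c)\in J$, while $U\cap U'=\emptyset$ forces $IJ\subseteq\sqrt 0$; thus $(1-\lambda c)(1-\mu(1-c))\in IJ$ is nilpotent, and raising to a suitable power $N$ together with $(1-\lambda c)^N=1-yc$, $(1-\mu(1-c))^N=1-y'(1-c)$ (by the binomial theorem, for appropriate $y,y'$) yields the identity. The converse is immediate: from $(1-yc)(1-y'(1-c))=0$ one has $V^R_c\subseteq X^R_{1-yc}$ and $V^R_{1-c}\subseteq X^R_{1-y'(1-c)}$ with $X^R_{1-yc}\cap X^R_{1-y'(1-c)}=X^R_{(1-yc)(1-y'(1-c))}=\emptyset$, and since any two disjoint closed subsets of $\X R$ are contained in a pair $V^R_i$, $V^R_{1-i}$, the space $\X R$ is normal, so $R$ is pm.

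The second fact is the technical heart: \emph{$\X A$ is completely normal if and only if $A_s$ is a pm-ring for every $s\in A$}. Here ``$\Rightarrow$'' is immediate, since $\X{A_s}$ is homeomorphic to the open subspace $X^A_s$ of $\X A$ and a subspace of a completely normal space is normal. For ``$\Leftarrow$'' the two points are: (i) the pm-property in the equational form above is preserved under directed colimits of rings, because an equation $r=0$ valid in a ring stays valid under any ring homomorphism and, for a given $c$, the witnesses $y,y'$ may be taken from a stage at which $c$ already lives; since $S^{-1}A=\varinjlim_{s\in S}A_s$ for any multiplicative $S$, every ring of fractions $S^{-1}A$ is then a pm-ring; and (ii) a failure of hereditary normality of $\X A$ can always be exhibited on a subspace of the form $\X{S^{-1}A}=\{P\in\X A:P\cap S=\emptyset\}$, which by (i) cannot be non-normal. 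Step (ii) — that complete normality of a spectral space is detected on the ``pro-basic-open'' subspaces $\X{S^{-1}A}$ — is the point I expect to require real care; for it I would either give a direct argument or invoke the analysis of completely normal spectra by Schwartz and Tressl cited above.

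Granting these, the theorem follows by a denominator computation. Fix $s,a\in A$ and work in $R=A_s$ with $c=a/s$, so $1-c=(s-a)/s$ and $c+(1-c)=1$. The first fact gives $y,y'\in A_s$ with $(1-yc)(1-y'(1-c))=0$; writing $y=x/s^{p}$, $y'=x'/s^{p}$ (padding to a common exponent $p$), this becomes $(s^{p+1}-xa)(s^{p+1}-x'(s-a))=0$ in $A_s$, hence $s^{m}(s^{p+1}-xa)(s^{p+1}-x'(s-a))=0$ in $A$ for some $m\ge 0$; after multiplying by one more power of $s$ if needed so that the exponent of $s$ outside the brackets is even, splitting it evenly between the two brackets, and renaming, this reads $s^{2}(s^{k-1}-\tilde x a)(s^{k-1}-\tilde x'(s-a))=0$, i.e. $(s^{k}-\tilde x s a)(s^{k}-\tilde x'(s^{2}-sa))=0$, which is the displayed identity. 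Conversely the displayed identity, read in $A_s$ and divided by the unit $s^{2}$, is exactly $(1-\bar x c)(1-\bar x'(1-c))=0$ for $c=a/s$ and suitable $\bar x,\bar x'\in A_s$; and since every element of $A_t$ has the form $b/t^{n}=b/s'$ with $s'=t^{n}$ (and $A_{s'}=A_t$), quantifying the displayed identity over all $s,a\in A$ amounts to quantifying the pm-identity over all elements of $A_s$ for every $s\in A$, which by the first fact is exactly ``$A_s$ is a pm-ring for all $s$''. Combined with the second fact, this is equivalent to $\X A$ being completely normal.

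In summary, the main obstacle is not the algebra but the topological reduction in the second fact, specifically the implication ``$A_s$ is a pm-ring for every $s$'' $\Rightarrow$ ``$\X A$ is completely normal'' — equivalently, that complete normality of $\X A$ is a property that can be verified after inverting one element at a time; the remainder is the pm-ring identity of the first fact together with routine bookkeeping of powers of $s$.
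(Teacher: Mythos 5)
The paper itself offers no proof of this statement: it is quoted verbatim from Schwartz and Tressl ([ST10], Corollary 6.4), so there is no internal argument to compare yours against. Judged on its own terms, much of your outline is correct: the equational characterization of pm-rings (your first fact), proved by shrinking the separating opens to quasi-compact ones and killing a nilpotent, is sound; the bookkeeping that translates the identity $(1-yc)(1-y'(1-c))=0$ for $c=a/s$ in $A_s$ into the displayed identity in $A$ and back (using $A_{t^n}=A_t$ to cover all elements of all localizations) is also fine; and the forward implication of the theorem is complete, since complete normality is hereditary and $\X{A_s}$ is homeomorphic to the subspace $X^A_s$ of $\X{A}$.

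The genuine gap is exactly where you flag it: the implication ``$A_s$ is a pm-ring for every $s\in A$ $\Rightarrow$ $\X{A}$ is completely normal''. This is the entire content of the cited corollary, so ``invoking the analysis of completely normal spectra by Schwartz and Tressl'' at that point is not a proof but a restatement of the citation the paper already makes; likewise your step (ii), that any failure of hereditary normality is witnessed on a subspace of the form $\X{S^{-1}A}$, is essentially equivalent to the nontrivial topological fact that a spectral space is hereditarily normal iff any two specializations of a common point are comparable. Note that the order-theoretic half of this comes cheaply from your hypotheses: if every $A_s$ is pm and $Q,Q'$ are incomparable primes containing a prime $P$, choose $s\notin Q\cup Q'$; the unique prime over $P$ maximal among those avoiding $s$ contains both $Q$ and $Q'$, so $s\notin rad(Q+Q')$, hence $rad(Q+Q')\subseteq Q\cup Q'$, and since an ideal contained in the union of two ideals lies in one of them, $Q$ and $Q'$ would be comparable, a contradiction. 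What remains unproved in your proposal is the converse topological step, that this chain condition on specializations forces $\X{A}$ to be hereditarily normal; that is the actual heart of the Schwartz--Tressl result, and without supplying it (or an independent argument for your step (ii)) the proof is incomplete.
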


We have used the following theorem in [\cite{AM}, Chapter 1, Excercise 21] to conclude corollary 4.2.

\begin{theorem}\label{2.5}
Let $A$ and $B$ be two rings and $f : A \to B$ is a ring homomorphism. Then the following are true.

a. $f^*: spec \ B \to spec \ A$, defined by $f^*(P) = f^{-1} (P)$ is continuous.

b. $f^* (spec \ (B))$ is dense in $spec \ (A)$ if $f$ is injective.
\end{theorem}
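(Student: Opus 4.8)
The plan is to verify both parts directly from the description of the Zariski topology by its basic open sets $X^A_g = \set{Q \in \X{A} : g \notin Q}$, $g \in A$.

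For part (a) I would first recall why $f^*$ is well defined: if $P \in \X{B}$, then $f^{-1}(P)$ is an ideal of $A$, it is proper since $f(1) = 1 \notin P$, and it is prime since $f(ab) = f(a)f(b) \in P$ forces $f(a) \in P$ or $f(b) \in P$ (equivalently, $A/f^{-1}(P)$ embeds into the domain $B/P$). To check continuity it is enough to check that the preimage under $f^*$ of every basic open set is open. Given $g \in A$, I would compute
\[ (f^*)^{-1}(X^A_g) = \set{ P \in \X{B} : g \notin f^{-1}(P) } = \set{ P \in \X{B} : f(g) \notin P } = X^B_{f(g)}, \]
which is basic open in $\X{B}$; hence $f^*$ is continuous.

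For part (b) assume $f$ is injective. Density of $f^*(\X{B})$ in $\X{A}$ is equivalent to the statement that every nonempty basic open set $X^A_g$ of $\X{A}$ meets $f^*(\X{B})$ (a nonempty open set contains a nonempty basic open set). So suppose $X^A_g \neq \emptyset$, i.e. $g$ lies outside some prime ideal of $A$; then $g$ is not nilpotent in $A$, since the nilradical is the intersection of all primes. Because $f$ is injective, $f(g)$ is not nilpotent in $B$ either: $f(g)^n = 0$ would give $f(g^n) = 0$ and hence $g^n = 0$. Therefore some prime ideal $P$ of $B$ omits $f(g)$, i.e. $P \in X^B_{f(g)}$, and then $g \notin f^{-1}(P)$, so $f^*(P) \in X^A_g \cap f^*(\X{B})$, as required.

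Neither direction poses a genuine obstacle — the argument is essentially an unwinding of definitions — but the step deserving slight care is the passage in part (b) from ``$X^A_g$ is nonempty'' to ``$g$ is not nilpotent'' and back, which leans on the characterization of the nilradical as the intersection of all prime ideals; the injectivity of $f$ then enters precisely to carry non-nilpotency across the homomorphism. (Alternatively, one may invoke the standard fact that the closure of $f^*(\X{B})$ is exactly the set of primes of $A$ containing $\ker f$, which reduces to all of $\X{A}$ when $\ker f = 0$, but the hands-on argument above is self-contained.)
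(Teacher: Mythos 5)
Your proof is correct. The paper gives no proof of this theorem at all --- it simply cites Atiyah--Macdonald, Chapter 1, Exercise 21 --- and your argument (the preimage of the basic open set $X^A_g$ under $f^*$ is $X^B_{f(g)}$, and for density the characterization of the nilradical as the intersection of all primes combined with injectivity of $f$ shows every nonempty basic open set of $spec \ A$ meets the image of $f^*$) is exactly the standard solution of that exercise; your parenthetical remark that the closure of the image consists of the primes containing $\ker f$ is the sharper statement the paper itself invokes later, in the first theorem of Section 3.
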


In our framework, as $A$ is a subring of $B$, $f$ is here the inclusion map $i$ and hence $i^*(\X{B})$ is dense in $\X{A}$. Another simple point to note is $i^{-1}(P) = P \cap A$.

\section{Introduction of dense subrings and few of its properties}

 In this section we shall introduce the definition of a dense subring of a ring. But in prior to it,as promised, we first proved the vital lemma \ref{2.2},as stated, in the last section in more general framework.

 \begin{theorem}
  $A$ and $B$ be two rings. Let $f : A \to B$ be a ring homomorphism. $f^* : spec \ B \to spec \ B $ is the map defined in theorem \ref{2.5}. Then the followings are equivalent.

\begin{enumerate}
 \item kernel of $f$ is contained in nilradical of $A$
 \item $f^* (spec \ B )$ is dense in $spec \ A$
 \item For each prime ideal $P$ os $A$, there exists a prime ideal $Q$ in $B$ such that $f^*(Q) \subseteq P$
\end{enumerate}

 \end{theorem}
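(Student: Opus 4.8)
The plan is to establish the cycle of implications $(1)\Rightarrow(3)\Rightarrow(2)\Rightarrow(1)$, using the standard correspondence between prime ideals of $B$ and prime ideals of $A$ induced by $f$. Write $\mathfrak{n}_A$ for the nilradical of $A$, recall that $\mathfrak{n}_A=\bigcap\{P:P\in spec\ A\}$, and note that $f^*(spec\ B)$ is dense in $spec\ A$ precisely when every nonempty basic open set $X^A_a=\{P\in spec\ A: a\notin P\}$ meets $f^*(spec\ B)$, i.e. whenever $X^A_a\neq\emptyset$ there is $Q\in spec\ B$ with $a\notin f^{-1}(Q)$, equivalently $f(a)\notin Q$.

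For $(1)\Rightarrow(3)$: fix a prime ideal $P$ of $A$. Consider the multiplicative set $S=f(A\setminus P)$ in $B$. The claim is that $0\notin S$, i.e. $f(s)\neq 0$ for every $s\notin P$; indeed if $f(s)=0$ then $s\in\ker f\subseteq\mathfrak{n}_A\subseteq P$, contradicting $s\notin P$. Hence $S$ is a multiplicative subset of $B$ not containing $0$, so there is a prime ideal $Q$ of $B$ with $Q\cap S=\emptyset$ (take a prime minimal over the ideal generated appropriately, or localize $B$ at $S$). Then $f^{-1}(Q)$ contains no element of $A\setminus P$, so $f^*(Q)=f^{-1}(Q)\subseteq P$, which is exactly $(3)$.

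For $(3)\Rightarrow(2)$: this is essentially Theorem~\ref{2.5}(b) in spirit but without assuming injectivity; I would give the direct argument. Let $X^A_a$ be a nonempty basic open set, so there is $P\in spec\ A$ with $a\notin P$. By $(3)$ choose $Q\in spec\ B$ with $f^{-1}(Q)\subseteq P$. Since $a\notin P$ we get $a\notin f^{-1}(Q)$, i.e. $f^*(Q)\in X^A_a$, so $f^*(Q)\in X^A_a\cap f^*(spec\ B)$. Thus every nonempty basic open set meets $f^*(spec\ B)$, which gives density.

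For $(2)\Rightarrow(1)$: suppose $f^*(spec\ B)$ is dense in $spec\ A$ and let $a\in\ker f$; I must show $a$ is nilpotent, i.e. $a\in P$ for every $P\in spec\ A$. Suppose not, so $X^A_a\neq\emptyset$. By density there is $Q\in spec\ B$ with $f^*(Q)=f^{-1}(Q)\in X^A_a$, i.e. $a\notin f^{-1}(Q)$, i.e. $f(a)\notin Q$. But $f(a)=0\in Q$, a contradiction. Hence $a\in\mathfrak{n}_A$, giving $(1)$.

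The main obstacle is the existence step inside $(1)\Rightarrow(3)$: one needs a prime ideal of $B$ disjoint from the multiplicative set $S=f(A\setminus P)$, and the care required is to verify $0\notin S$, which is exactly where the hypothesis $\ker f\subseteq\mathfrak{n}_A$ is used (and where it cannot be weakened). Everything else is a routine unwinding of the definition of the Zariski topology via basic open sets; note also that taking $f$ to be the inclusion $i:A\hookrightarrow B$ recovers Lemma~\ref{2.2}, since then $\ker i=0\subseteq\mathfrak{n}_A$ automatically.
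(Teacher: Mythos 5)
Your proof is correct, but it is organized differently from the paper's. The paper does not actually prove the equivalence of (1) and (2): it cites Atiyah--Macdonald, Chapter 1, Exercise 21 for that, and its own work goes into $(2)\Rightarrow(3)$, where for each $a\notin P$ density supplies a prime $Q_a$ of $B$ with $f(a)\notin Q_a$, one forms $I=\bigcap_{a\notin P}Q_a$, checks $I\cap f(A\setminus P)=\emptyset$, and then invokes the standard Krull/Zorn lemma to get a single prime $Q\supseteq I$ disjoint from $f(A\setminus P)$, whence $f^{-1}(Q)\subseteq P$; the converse $(3)\Rightarrow(2)$ is the same short argument you give. You instead close the cycle $(1)\Rightarrow(3)\Rightarrow(2)\Rightarrow(1)$ and attack $(1)\Rightarrow(3)$ directly: the hypothesis $\ker f\subseteq\mathrm{nil}(A)$ is precisely what guarantees $0\notin f(A\setminus P)$, after which the same disjoint-prime lemma (equivalently, localizing $B$ at $f(A\setminus P)$ and contracting a maximal ideal) produces $Q$ with $Q\cap f(A\setminus P)=\emptyset$; your $(2)\Rightarrow(1)$ is a clean two-line argument via the basic open set $X^A_a$ for $a\in\ker f$. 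What your route buys: the theorem becomes self-contained, with the cited exercise of \cite{AM} recovered as a consequence rather than used as an input, and the only nontrivial existence step uses Zorn once, driven by the kernel hypothesis alone, instead of first manufacturing the whole family $\{Q_a\}$ from density and intersecting it. What the paper's decomposition buys: by proving $(2)\Rightarrow(3)$ it shows that density of $f^*(\X{B})$ by itself forces the lying-below property, without routing through condition (1), which is the form the paper actually reuses (it is how Lemma \ref{2.2} is recovered); your version recovers Lemma \ref{2.2} equally well via $\ker i=\{0\}$. Both arguments are sound, so this is a matter of economy and emphasis rather than of a gap on either side.
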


  Now in our set up, $A$ is a subring of $B$, with $f$ the inclusion map $i$ and $i^* (P) = P \cap A$ and furthermore kernel of $i$ is $\{0\}$ and so satisfies (1) and hence the lemma \ref{2.2} follows directly from the above theorem. Moreover $i^* (spec \ B) $ is dense in $spec \ A$. $i^*$ is also continuous as follows from theorem \ref{2.5}.

 \begin{proof}
  (1) if and only if (2) follows from chapter 1, excercise 21 of \cite{AM}. \\

  $(2) \Rightarrow (3)$: First we note that $P \in X_a^A$ if and only if $a \notin P$. So for each $a \notin P$, there exists a prime ideal $Q_a$ in $B$ such that $f^* (Q)$ is in $X_a^A$. Let $S = A \backslash P$ and $I =  \bigcap_{a \in S} Q_p$. Then $I \cap f(S) = \emptyset$ and $f(S)$ is multiplicatively closed subset of $B$. So there exists a prime ideal $Q$ in $B$ such that $Q$ contains $I$ and $Q \cap S = \emptyset$. This follows that $f^* (Q) \cap (A \backslash P) = \emptyset$. So $f^* (Q) \subseteq P$. \\

  $(3) \Rightarrow (2)$: Let $P \in spec \ A$. So by hypothesis, there exists a $Q$ in $ spec \ B$, such that $f^* (Q) \subseteq P$. So any basic open set containing $P$ must contain $F^* (Q)$. So $f^* (spec \ B )$ is dense in $spec \ A$.\\
 \end{proof}

 Now we shall give formal definition of dense subring of a ring which is the most essential and principal part of this paper.

 \begin{definition}\label{3.1} Let $A$ be a subring of $B$ containing $1$. $A$ is called dense subring of $B$ if for any ideal $I$ of $B$,and, for any $b \notin rad(I)$, there exists $a \notin rad(I)$, such that $ab \in A$.
 \end{definition}

Few examples of dense subrings are given below.

\begin{example}

1. The integers ring $ \mathbb{Z}$ is dense in the rational ring $\mathbb{Q}$.

2. For any space $X$, $C^* (X)$, infact any intermediate subrings, are dense  in $C(X)$. More generally, if $B$ is a ring of fraction of $A$, then $A$ is dense in $B$.

3. $\mathbb{Q}$ is also dense in $\mathbb{R}$. Infact we have the following general result

\end{example}

\begin{theorem} \label{3.1a}
If $A$ is a subring of $B$ and $B$ is a subring of $C$ and further $A$ is dense in $C$, then $B$ is also dense in $C$.
\end{theorem}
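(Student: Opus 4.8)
The plan is simply to unwind Definition \ref{3.1}. The crucial observation is that both assertions ``$A$ is dense in $C$'' and ``$B$ is dense in $C$'' quantify over exactly the same data — an arbitrary ideal $I$ of $C$ and an arbitrary element $b \in C$ with $b \notin rad(I)$, the radical being computed in the common ambient ring $C$ — and they differ only in where the witnessing product $ab$ is required to lie: in $A$ for the first statement and in $B$ for the second. Since $A \subseteq B$, the first requirement is the stronger one, so the conclusion should follow formally from the hypothesis.

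Concretely, I would fix an ideal $I$ of $C$ and an element $b \in C \setminus rad(I)$. Invoking the hypothesis that $A$ is dense in $C$ for this same $I$ and $b$, I obtain $a \in C$ with $a \notin rad(I)$ and $ab \in A$. Because $A$ is a subring of $B$, we then have $ab \in A \subseteq B$, so the very same element $a$ verifies the defining condition for $B$ to be dense in $C$ relative to $I$ and $b$. As $I$ and $b$ were arbitrary, $B$ is dense in $C$.

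There is essentially no obstacle here; the only point requiring a moment's care is that in both applications of Definition \ref{3.1} the radical $rad(I)$ is taken inside $C$, which is automatic since $C$ is the ambient ring throughout, and that ``$ab \in A$'' trivially upgrades to ``$ab \in B$'' via the inclusion $A \subseteq B$. In short, the statement records the evident monotonicity of denseness in the bottom ring, and the proof is a one-line containment argument.
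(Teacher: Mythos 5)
Your proposal is correct and is essentially the paper's own argument: apply the definition of density of $A$ in $C$ to the given ideal $I$ and element outside $rad(I)$, and upgrade the witnessing product from $A$ to $B$ via the inclusion $A \subseteq B$. Nothing further is needed.
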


\begin{proof}
  Since $A$ is dense in $C$, for any ideal $I$ of $C$ and for any $a \notin rad (I)$, there exists a $b \notin rad (I)$, $ab \in A$. Hence $ab \in B$. So $B$ is dense in $C$
\end{proof}

However we don't know whether the following result is true or even we could not crack any counter example.  If $A$ is dense in $B$, thes whether $A$ is dense in any intermediate subring of $B$. By the way, any subring lying between $A$ and $B$, henceforth, will be referred as intermediate subring of $B$.

%********************************************************************************************************************************************
%Let for any subring $A$ of $B$, we denote $\bar{A} = \{ b \in B: \exists a \in B \ with \ a \neq 0, ba \in A\}$. Clearly $A \subseteq \bar{A}$ (infact %for any $a \in A$, choose $1 \in A$).

%\begin{theorem}
% $A$ is dense in $B$ if and only if $\bar{A} = B \bs \script{N} (A)$. Hence %if $\script{N}(A)  = 0$, then $A$ is dense in $B$ if and only if $\bar{A} = %B $
%\end{theorem}

%\begin{theorem}
%For any two subrings $A \subseteq B$, $\bar{A}\subseteq \bar{B}$.
%\end{theorem}

%\begin{proof}
%Trivially follows from the definition.
%\end{proof}

%\begin{theorem}
%For any subring $A$, $\bar{\bar{A}} = \bar{A}$
%\end{theorem}

%\begin{proof}
%Let $b \in \bar{\bar{A}}$, there exists $a \in B a \neq 0$, $ab \in \bar{A}%%$. So there exists $c \in B, c \neq 0$, $(ab)c \in A$, that is, $a(bc)\in A%$. $bc \neq 0$ as \mathcal{N}(A) = %0$
%\end{proof}
%*********************************************************************************************************************************************

However we have few more properties of denseness below.

\begin{theorem}\label{3.2}

If $A$ is dense in $B$ and $B$ is dense in $C$, then $A$ is dense in $C$.
\end{theorem}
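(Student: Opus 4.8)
The plan is to first convert the defining property of denseness into a statement purely about prime ideals, and then chain the two hypotheses together along a single prime ideal. The reformulation I would record is: a subring $A$ is dense in a ring $B$ if and only if for every prime ideal $P$ of $B$ and every $b \in B \bs P$ there exists $a \in B \bs P$ with $ab \in A$. The forward implication is the definition of a dense subring (Definition \ref{3.1}) applied to the ideal $I = P$, for which $rad(I) = P$. For the converse, given an ideal $I$ of $B$ and $b \notin rad(I)$, one chooses a prime ideal $P \supseteq I$ with $b \notin P$ (such a $P$ exists since $rad(I)$ is the intersection of the primes over $I$); the prime statement produces $a \in B \bs P$ with $ab \in A$, and $rad(I) \subseteq P$ forces $a \notin rad(I)$. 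If one prefers to avoid isolating this equivalence, the same effect is obtained by applying Definition \ref{3.1} directly to a prime ideal containing the ideal in question at each step below.

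Now let $P$ be a prime ideal of $C$ and let $c \in C \bs P$; the goal is to produce $a' \in C \bs P$ with $a'c \in A$. Since $B$ is dense in $C$, the prime form of the hypothesis gives $b \in C \bs P$ with $bc \in B$. As $P$ is prime and $b, c \notin P$, we get $bc \notin P$, so $bc \in B \bs (P \cap B)$, and $P \cap B$ is a proper prime ideal of $B$. Applying the prime form of ``$A$ dense in $B$'' to the prime ideal $P \cap B$ and the element $bc$ yields $a \in B \bs (P \cap B)$ with $a(bc) \in A$. Since $a \in B$, the relation $a \notin P \cap B$ gives $a \notin P$; hence $ab \notin P$ by primeness again, while $(ab)c = a(bc) \in A$. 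Taking $a' = ab$ completes the verification, and hence $A$ is dense in $C$.

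The one subtlety worth flagging is why the argument cannot be run directly on a non-prime ideal $I$ of $C$: from $b, c \notin rad_C(I)$ one cannot conclude $bc \notin rad_C(I)$, since the radical of a non-prime ideal need not be prime, so the contracted ideal $I \cap B$ paired with $bc$ may fail the hypothesis for $A$ dense in $B$. Funnelling everything through a single prime ideal $P$ lying over $I$ — equivalently, passing to the prime reformulation — is exactly what removes this obstruction. Beyond that point the proof uses only that the contraction of a prime ideal is a proper prime ideal, so no further radical bookkeeping is needed.
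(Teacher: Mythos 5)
Your proof is correct and follows essentially the same route as the paper's: both reduce the radical condition to a single prime ideal $P$ of $C$ lying over the given ideal, apply density of $B$ in $C$ to $P$ (using $rad(P)=P$), contract to the prime $P\cap B$ to apply density of $A$ in $B$, and multiply the two witnesses. The only difference is presentational — you isolate the prime-ideal reformulation of denseness as a preliminary equivalence, whereas the paper performs the same reduction inline.
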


\begin{proof} Let $I$ be an ideal of $C$ and $b \notin rad(I)$. So there exists a prime ideal $P$ so that $b \notin P$. Now for any prime ideal $P$, $rad{P} = P$. So there exists $b \notin P$ so that $ab \in B$. Clearly $ab \notin P\cap B$. Again $P\cap A$ is  a prime ideal of $B$. So there exists a $c \notin P\cap B$ such that $cab \in A$. Clearly $ca \notin P$ and hence not in $rad (I)$ but $(ca)b \in A$.
\end{proof}

Unfortunately we don't know whether $A$ is dense in any intermediate subring of $B$ whence $A$ is dense in $B$. We could not crack any counter example in this regard. However we move forward to the following important theorem of this paper.

\begin{theorem} \label{3.3} If $A$ is dense in $B$, then the map $i^* : spec B \to spec A$ is one--one.
\end{theorem}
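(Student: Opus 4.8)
The plan is to show that if $P_1, P_2 \in \operatorname{spec} B$ with $P_1 \cap A = P_2 \cap A$, then $P_1 = P_2$. By symmetry it suffices to prove $P_1 \subseteq P_2$. Suppose not, and pick $b \in P_1 \setminus P_2$. The idea is to use the density of $A$ in $B$ with a judiciously chosen ideal $I$ of $B$ so that the element $a$ produced by Definition~\ref{3.1} lands in $A$ but its behaviour at $P_1$ versus $P_2$ forces a contradiction with $P_1 \cap A = P_2 \cap A$.

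The natural choice is $I = P_2$ (which is radical, so $\operatorname{rad}(I) = P_2$). Since $b \notin P_2 = \operatorname{rad}(I)$, density gives $a \notin P_2$ with $ab \in A$. Now $b \in P_1$ forces $ab \in P_1$, hence $ab \in P_1 \cap A$. On the other hand $a \notin P_2$ and $b \notin P_2$ give $ab \notin P_2$, since $P_2$ is prime; thus $ab \notin P_2 \cap A$. This contradicts $P_1 \cap A = P_2 \cap A$. Therefore $P_1 \subseteq P_2$, and by the symmetric argument $P_2 \subseteq P_1$, so $P_1 = P_2$; that is, $i^\ast$ is one-one. Combined with Theorem~\ref{2.5}, $i^\ast$ is a continuous injection whose image is dense in $\operatorname{spec} A$.

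I expect this argument to be essentially immediate once the right ideal $I = P_2$ is chosen; there is no real obstacle, the only subtlety being to remember that prime ideals are their own radicals (already noted in the proof of Theorem~\ref{3.2}) so that Definition~\ref{3.1} applies directly with $\operatorname{rad}(I) = P_2$. One could also phrase the whole thing slightly more symmetrically by noting that $i^\ast$ injective is equivalent to: for all $P_1 \neq P_2$ in $\operatorname{spec} B$, $P_1 \cap A \neq P_2 \cap A$, and then doing the case analysis on which of $P_1, P_2$ fails to contain an element of the other; but the one-line reduction ``prove $P_1 \subseteq P_2$'' above avoids any casework.
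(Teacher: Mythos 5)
Your argument is correct and is essentially the paper's own proof, just phrased contrapositively: in both cases one takes an element of one prime not in the other, applies density with the ideal $I$ equal to the second prime (using that a prime equals its radical), and observes that the resulting product lies in $A$ and in the first prime but not the second, separating the two contractions. No meaningful difference in approach.
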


\begin{proof} Let $P \neq Q, P, Q \in B$. Suppose $a \in P \bs Q$. There exists $b \notin Q$ such that $ab \in A$. So $ab \in P \cap A \bs Q\cap A$. So  $i^*(P) = P\cap A \neq Q\cap A = i^* (Q)$.
\end{proof}

\begin{remark} \label{3.4} In the above proof, mere interchanging the role of $P$ and $Q$, we can conclude that if two prime ideals of $B$ are non-comparable, their $i^*$ images are still non-comparable, provided $A$ is dense in $B$.
\end{remark}

\begin{lemma}\label{3.5} If $A$ is dense in $B$, $P,Q$ are two prime ideals of $B$, then, $P \cap A \subseteq Q \cap A$ implies that $P \subseteq Q$.
\end{lemma}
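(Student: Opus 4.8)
The plan is to prove the contrapositive: assuming $P \not\subseteq Q$, I will produce an element of $P \cap A$ that does not belong to $Q \cap A$, which shows $P \cap A \not\subseteq Q \cap A$ and hence establishes the lemma by contraposition.

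First I would choose $a \in P \setminus Q$. The key move is to apply the denseness hypothesis (Definition \ref{3.1}) with the ideal $I = Q$ \emph{itself}: since $Q$ is prime we have $rad(Q) = Q$, so the condition $a \notin Q$ is exactly the condition $a \notin rad(I)$, and denseness of $A$ in $B$ then yields some $b \notin rad(I) = Q$ with $ab \in A$.

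Next I would note that $ab \in P$ because $a \in P$ and $P$ is an ideal, so $ab \in P \cap A$. On the other hand, $a \notin Q$, $b \notin Q$, and $Q$ prime force $ab \notin Q$, whence $ab \notin Q \cap A$. This is precisely the witness separating $P \cap A$ from $Q \cap A$, which completes the contrapositive argument.

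This is essentially the same computation already used in Theorem \ref{3.3} and Remark \ref{3.4}, now read in the inclusion-ordered setting, so I do not expect a real obstacle. The only point that needs a word of care is the (routine) observation that one is entitled to feed the prime ideal $Q$ into Definition \ref{3.1} as the ideal $I$, which is legitimate exactly because a prime ideal coincides with its own radical; everything else is formal.
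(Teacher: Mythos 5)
Your proposal is correct and is essentially the paper's own argument read contrapositively: the paper fixes $a \notin Q$, uses denseness to get $b \notin Q$ with $ab \in A$, notes $ab \notin Q \cap A$ hence (by the hypothesis $P\cap A \subseteq Q\cap A$ and $ab\in A$) $ab \notin P$ and so $a \notin P$, which is the same witness $ab$ you produce. The only cosmetic difference is your explicit remark that one may take $I=Q$ in Definition \ref{3.1} because $rad(Q)=Q$, a point the paper uses silently.
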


\begin{proof}
Suppose $a \notin Q$. Due to density, there exists a $ b \notin Q$ such that $ab \in A$. So $ab \notin Q \cap B$. This implies $ab \notin P\cap A$. So $ab \notin P$. Hence $a \notin P$. So $ P \subseteq Q$.
\end{proof}

The following theorem depicts a structure of minimal prime ideal of dense subrings of a ring.

\begin{theorem} \label{3.6}
If $A$ is dense in $B$, any minimal prime ideal of $A$ is precisely of the form $Q \cap B$, for some unique minimal prime ideal $Q$ of $B$.
\end{theorem}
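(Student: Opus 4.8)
The plan is to deduce the theorem from Lemma~\ref{2.2} together with the injectivity of the contraction map $i^*$ proved in Theorem~\ref{3.3}. I read the statement as asserting that every minimal prime of $A$ has the form $Q\cap A$ (not $Q\cap B$) for a unique minimal prime $Q$ of $B$, since $Q\cap B=Q$ would make the claim vacuous.

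First I would establish existence. Let $\mathfrak{p}$ be a minimal prime ideal of $A$. By Lemma~\ref{2.2} there is a prime ideal $P$ of $B$ with $P\cap A\subseteq\mathfrak{p}$; since $P\cap A$ is a prime ideal of $A$ and $\mathfrak{p}$ is minimal among such, we get $P\cap A=\mathfrak{p}$.

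Next I would show that this $P$ is automatically a minimal prime of $B$. Suppose $P'$ is a prime ideal of $B$ with $P'\subseteq P$. Then $P'\cap A\subseteq P\cap A=\mathfrak{p}$, and minimality of $\mathfrak{p}$ forces $P'\cap A=\mathfrak{p}=P\cap A$; but $A$ is dense in $B$, so $i^*$ is one--one by Theorem~\ref{3.3}, whence $P'=P$. Thus $P$ is minimal in $B$ and has the required property. Equivalently one may first pass to a minimal prime of $B$ contained in $P$ --- such a prime exists in any commutative ring, cf. [\cite{AM}, Chapter~I, Exercise~20] --- and then observe its contraction again equals $\mathfrak{p}$ by the same minimality argument.

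Finally, uniqueness is immediate: if $Q_1,Q_2$ are minimal primes of $B$ with $Q_1\cap A=\mathfrak{p}=Q_2\cap A$, then $Q_1=Q_2$ because $i^*$ is injective (Theorem~\ref{3.3}); one could also argue via Lemma~\ref{3.5}, getting $Q_1\subseteq Q_2$ and $Q_2\subseteq Q_1$. I do not expect a genuine obstacle here; the only point requiring care is the order of the argument --- one cannot simply contract an arbitrary minimal prime of $B$ and hope to land on a minimal prime of $A$ without first using Lemma~\ref{2.2} to trap a prime of $B$ below $\mathfrak{p}$, after which injectivity of $i^*$ does the rest.
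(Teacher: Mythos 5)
Your argument is correct and is essentially the paper's own: Lemma~\ref{2.2} produces a prime $Q$ of $B$ with $Q\cap A\subseteq \mathfrak{p}$, minimality of $\mathfrak{p}$ upgrades this to equality, and injectivity of $i^*$ (Theorem~\ref{3.3}) gives both the minimality of $Q$ and the uniqueness, exactly as in the paper; your reading of ``$Q\cap B$'' as a typo for $Q\cap A$ is also the intended one. The only difference is that the paper's proof additionally establishes the converse half --- that for \emph{every} minimal prime $Q$ of $B$ the contraction $Q\cap A$ is a minimal prime of $A$ (via Lemma~\ref{2.2} together with Lemma~\ref{3.5} and minimality of $Q$) --- which your literal reading of the statement omits, but which the subsequent theorem identifying $\min(A)$ with $\min(B)$ needs in order for the map $Q\mapsto Q\cap A$ to be well defined.
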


\begin{proof} Let $P$ be a minimal prime ideal of $A$. By [lemma 1.4, \cite{MO71}, there exists a prime ideal $Q$ in $B$ such that $Q \cap A \subseteq P$. So $Q \cap A = P$. Suppose now $Z$ be a prime ideal of $B$ such that $Z \subseteq Q$. Then $Z \cap A \subseteq Q \cap A = P$. Due to minimality of $P$, $ Z \cap A = P = Q\cap A$. Thus $i^* (Z) = i^* (Q)$. As $i^*$ is injective, (by theorem \ref{3.3}), $Z =  Q$. So $Q$ is minimal. Uniqueness of $Q$ again follows from injectivity of $i^*$.  Conversely suppose $Q$ is minimal. Let P be a prime ideal of $A$ contained in $ Q \cap A$. Then by lemma 1.4 of \cite{MO71}, there exists a prime ideal $Z$ so that $Z \cap A \subseteq P$. By lemma \ref{3.5}, $Z \subseteq Q$. Hence $Z = Q$, due to minimality of $Q$. So $ P = Q \cap A$. Thus $Q\cap A$ is minimal.
\end{proof}

\begin{theorem}\label{3.7}
If $A$ is dense in $B$ and $B$ is a pm-ring, then for any two distinct maximal ideals $M \ and \ M'$, $(M\cap A) \cap (M' \cap A)$ does not contain any prime ideal of $A$,
\end{theorem}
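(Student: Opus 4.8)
The plan is to argue by contradiction, converting a prime ideal of $A$ that sits below $(M\cap A)\cap(M'\cap A)$ into a prime ideal of $B$ that sits below both $M$ and $M'$, and then to invoke the pm-hypothesis on $B$ to force $M=M'$.

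First I would suppose, for contradiction, that there is a prime ideal $P$ of $A$ with $P \subseteq (M\cap A)\cap(M'\cap A) = M\cap M'\cap A$. Applying Lemma \ref{2.2} to $P \in spec \ A$, I obtain a prime ideal $Q$ of $B$ with $Q\cap A\subseteq P$. Chaining this with the two inclusions $P\subseteq M\cap A$ and $P\subseteq M'\cap A$ gives $Q\cap A\subseteq M\cap A$ and $Q\cap A\subseteq M'\cap A$.

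Next, since $A$ is dense in $B$, Lemma \ref{3.5} applies to the prime ideals $Q$ and $M$ (and to $Q$ and $M'$), $M$ and $M'$ being maximal and hence prime. From $Q\cap A\subseteq M\cap A$ I conclude $Q\subseteq M$, and from $Q\cap A\subseteq M'\cap A$ I conclude $Q\subseteq M'$. Thus $Q$ is a prime ideal of $B$ contained in two maximal ideals of $B$; but $B$ is a pm-ring, so the maximal ideal containing $Q$ is unique, forcing $M=M'$, contrary to $M\neq M'$. Hence no such prime ideal $P$ of $A$ exists.

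I do not anticipate a real obstacle here: the argument is just transitivity of inclusion together with Lemmas \ref{2.2} and \ref{3.5} and the definition of a pm-ring. The only point requiring a moment's care is confirming that denseness of $A$ in $B$ is precisely the hypothesis needed to legitimately invoke Lemma \ref{3.5}. One could also phrase the proof without contradiction: any prime ideal of $A$ below $M\cap M'\cap A$ pulls back via Lemma \ref{2.2} to a prime of $B$ which, by Lemma \ref{3.5}, lies below both $M$ and $M'$, an impossibility for a pm-ring when $M\neq M'$.
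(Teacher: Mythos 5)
Your argument is correct and is essentially identical to the paper's own proof: both pass from a prime $P\subseteq M\cap M'\cap A$ to a prime $Q$ of $B$ with $Q\cap A\subseteq P$ via Lemma \ref{2.2}, then use Lemma \ref{3.5} (denseness) to get $Q\subseteq M\cap M'$, contradicting the pm-property of $B$. No gaps; nothing further to add.
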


\begin{proof}
Let $P$ be a prime ideal of $A$ such that $P \subset (M \cap A) \cap (M' \cap A) = M \cap M' \cap A$. By lemma 1.4 of \cite{MO71}, there exists a prime ideal $Q$ of $B$ such that $Q \cap A \subseteq P \subseteq M\cap M' \cap A$. By lemma \ref{3.5}, $Q \subseteq M \cap M'$. Then $M = M'$ as $B$ is a pm-ring, a contradiction.
\end{proof}

The following theorem is mere a observation and has no follow-up in this paper.

\begin{theorem}
If $A$ is dense in $B$ and $u \in A$ is a unit of $B$, then for any inverse $v$ of $u$ in $B$, the collection $C^v_u = \{r \in B: rv \in A\}$ is a non-trivial ideal of $A$.
\end{theorem}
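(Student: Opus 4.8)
The plan is to verify, by direct computation, the three assertions packaged in the statement: that $C^v_u$ actually lies inside $A$, that it obeys the ideal axioms over $A$, and that it is non-trivial. First I would record that, $B$ being commutative with identity, the inverse of $u$ is unique, so $v=u^{-1}$ and $uv=1$; also $1\in A$, since every subring in this paper contains the identity of the ambient ring. The one genuinely substantive point is that $C^v_u\subseteq A$: given $r\in C^v_u$, i.e. $rv\in A$, we have $r=r\cdot 1=r(uv)=(rv)u$, a product of the two elements $rv$ and $u$ of $A$, whence $r\in A$. This is precisely where the hypothesis $u\in A$ (as opposed to merely $u\in B$) gets used.

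With that in hand the ideal axioms are routine. We have $0\in C^v_u$ because $0\cdot v=0\in A$; if $r,s\in C^v_u$ then $(r-s)v=rv-sv\in A$, so $r-s\in C^v_u$; and if $a\in A$ and $r\in C^v_u$, then $ar\in B$ with $(ar)v=a(rv)\in A$, so $ar\in C^v_u$. Combined with $C^v_u\subseteq A$, this exhibits $C^v_u$ as an ideal of $A$.

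For non-triviality the shortest argument is that $uv=1\in A$ gives $u\in C^v_u$, and $u\neq 0$ because $u$ is a unit of $B$, so $C^v_u\neq\{0\}$; alternatively one may bring the density hypothesis to bear, since $v$, being a unit, avoids the nilradical $rad(\{0\})$ of $B$, so density produces some $a\notin rad(\{0\})$ with $av\in A$, i.e. a nonzero $a\in C^v_u$. If \emph{non-trivial} is read in the stronger sense (proper and nonzero) one adds the observation that $C^v_u=A$ iff $1\in C^v_u$ iff $v\in A$ iff $u$ is already invertible in $A$, so $C^v_u$ is a proper ideal exactly when $u$ is a unit of $B$ but not of $A$ — the only case of interest. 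I do not expect any real obstacle here: the result is, as the authors concede, an observation, and the only point needing care is the realisation that $C^v_u$, although defined as a subset of $B$, is automatically contained in $A$.
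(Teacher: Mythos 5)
Your proposal is correct and follows essentially the same route as the paper: the key point is the containment $C^v_u\subseteq A$ via $r=(rv)u$ with $rv,u\in A$, followed by the routine ideal axioms and non-triviality. The only minor divergence is that the paper derives non-triviality from denseness of $A$, whereas you also observe the simpler fact that $uv=1\in A$ puts the nonzero element $u$ itself in $C^v_u$, so the density hypothesis is not actually needed for that part.
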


\begin{proof}
 As $rv \in A, u \in A, (rv)u = r \in A$. $C^v_u $ is a subset of $A$. Checking of ideal is trivial. And non-triviality follows from denseness of $A$.
\end{proof}

\section{Effect of denseness on Spectrum and maximal spectrum of a ring}

In this section we shall explore the relation between spectrum of rings $A$ and $B$ if $A$ is dense in $B$.

\begin{theorem} \label{4.1}
Let $A$ and $B$ be two commutative rings with 1, with $A$ subring of $B$. Then $A$ is dense in $B$ if and only if $i^*: \X{B} \to \X{A}$ is one and open (Here open means that if $U$ is open in $\X{B}$, $i^*(U)$ is open in $i^*(\X{B})$.)
\end{theorem}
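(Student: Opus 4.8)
I would prove the equivalence in Theorem \ref{4.1} by splitting it into two implications, and in each direction reducing "open" to a statement about basic open sets $X^B_f$, since these form a base for $\X{B}$.

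**Setting up.** First note that injectivity of $i^*$ under the hypothesis that $A$ is dense in $B$ is already established in Theorem \ref{3.3}, so the content is really the equivalence of denseness with openness of $i^*$ onto its image. For the forward direction, assume $A$ is dense in $B$. Since $i^*$ is injective, it is a bijection onto $i^*(\X{B})$, and it suffices to show $i^*(X^B_f)$ is open in $i^*(\X{B})$ for each $f \in B$. I would guess the right candidate: $i^*(X^B_f) = X^A_a \cap i^*(\X{B})$ for a suitable $a \in A$ obtained from denseness. Concretely, the natural move is to try to write the image as a union $\bigcup \{X^A_a : a \in A,\ a = bf \text{ for some } b \in B\}$, or more carefully: for $P \in X^B_f$, density applied to (say) the prime ideal $P$ and the element $f \notin P$ gives $b \notin P$ with $bf \in A$; set $a = bf \in A$, then $a \notin P$ so $i^*(P) \in X^A_a$, and conversely one checks that any $Q \in i^*(\X{B})$ lying in $X^A_a$ must be $i^*$ of a point of $X^B_f$ — this last step uses injectivity together with the fact that $a = bf$ forces $f$ out of the preimage prime. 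Thus $i^*(X^B_f) = \bigcup_{a}(X^A_a \cap i^*(\X{B}))$ over all $a \in fB \cap A$, which is open in $i^*(\X{B})$.

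**The converse.** Now assume $i^*$ is injective and open onto its image; I want to recover denseness. Given an ideal $I$ of $B$ and $b \notin \mathrm{rad}(I)$, pick a prime ideal $P \supseteq I$ with $b \notin P$, i.e. $P \in X^B_b$. By openness, $i^*(X^B_b)$ is open in $i^*(\X{B})$, so there is a basic open set $X^A_a$ of $\X{A}$ with $i^*(P) \in X^A_a \cap i^*(\X{B}) \subseteq i^*(X^B_b)$. The claim to extract is that this forces $a \in bB$ modulo something, equivalently that $a$ (or a power of $a$, to handle radicals) lies in the image set in a way that produces the required product. The cleanest route is: since $i^*(P) \in X^A_a$ we have $a \notin P$, so $a \notin \mathrm{rad}(I)$; and the inclusion $X^A_a \cap i^*(\X{B}) \subseteq i^*(X^B_b)$ should translate, via taking complements / closed sets and using that $\X{B}$ is compact, into a relation $a^n \in bB$ for some $n$, whence writing $a^n = b c$ with $c \in B$ gives an element $a^n \notin \mathrm{rad}(I)$ with $a^n \cdot$ — wait, we need the product with $b$; so instead one wants $a^n = b c$ to be read as: the element $a^n \in A$ is divisible by $b$ in $B$, which is exactly a denseness-type conclusion for $b$ with witness $c$ satisfying $cb = a^n \in A$ and $c \notin \mathrm{rad}(I)$ (since $c \notin P$ because $b c = a^n \notin P$). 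That last algebraic translation of "$X^A_a \cap i^*(\X{B}) \subseteq i^*(X^B_b)$" into "$a^n \in bB$" is, I expect, the main obstacle: one has to argue that a point $Q \in \X{B}$ with $b \in Q$ but $a \notin i^*(Q)$ cannot exist, and convert this to the ideal membership using the standard dictionary (a basic closed set $V^B_b$ containing $V^B_{a^n}\cap$ appropriate set corresponds to $a^n$ being in every prime containing $b$, i.e. in $\mathrm{rad}(bB)$). I would handle it by working with the closed complements and invoking that $\{P : b \in P\} \subseteq \{P : a \in i^*(P)\} = \{P : a \in P\}$ gives $a \in \bigcap_{P \ni b} P = \mathrm{rad}(bB)$, hence $a^n \in bB$.

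**Remark on what is routine vs. hard.** The forward direction is essentially bookkeeping with basic opens plus Theorem \ref{3.3}. The converse hinges entirely on the prime-ideal-to-ideal translation in the previous paragraph; once $a^n \in bB$ is in hand, choosing the witness $c$ and checking $c \notin \mathrm{rad}(I)$ is immediate. I would present the converse as the substantial half and flag the compactness-free argument $a \in \mathrm{rad}(bB)$ as the crux.
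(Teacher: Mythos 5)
Your proposal is correct and follows essentially the same route as the paper's proof: for the forward direction you apply denseness to a prime $P$ and $f\notin P$ to get $a=fg\in A$ and verify $X^A_a\cap i^*(\X{B})\subseteq i^*(X^B_f)$, and for the converse you use openness to obtain $X^A_a\cap i^*(\X{B})\subseteq i^*(X^B_b)$, deduce via injectivity that every prime of $B$ containing $b$ contains $a$, hence $a^n=cb$, and check the witness $c\notin \mathrm{rad}(I)$. This is exactly the paper's argument, including the fact that no compactness is needed.
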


\begin{proof}
Suppose $A$ is a dense subring of $B$. By thorem \ref{3.3}, $i^*$ is one-one. It is left to show that $i^*$ is open.

Let $X_f^B$ be a basic open set in $\X{B}$, where $f \in B$. Let $Q \in i^* (X_f^B)$. As $Q \in i^* (\X{B})$, there exists $P \in \X{B}$ such that $i^* (P) = Q$. That is $Q = P\cap A$.

As $i^*$ is one-one, $P \in X_f^A$. So $f \notin P$. But $P = rad P$.

Since $A$ is dense in $B$, there exists $g \notin P$, $fg \in A$. As $f,g \notin A$, $fg \notin P\cap A$. So $i^* (P) = P \cap A \in X_{fg}^A$, that is, $Q \in X_{fg}^A$. As $X_{fg}^A$ is a basic open set in $\X{A}$, $X_{fg}^A \cap i^* (\X{B})$ is a basic open set containing $Q$.

We claim that $X_{fg}^A \cap i^* (\X{B})\subseteq i^* (X_f^B)$ and this will prove that $i^* (X_f^B)$ is an open subset of $i^* (\X{B})$.

For that, let $Q' \in X_{fg}^A \cap i^* (\X{B})$. This implies that there exists $Z \in \X{B}$, such that $Q'= Z\cap A$. Now $fg \notin Q' = Z \cap A$. Since $fg \in A$, $fg \notin Z$. So $f,g \notin Z$. This implies that $Z \in X_f^B$. Hence $i^* (Z) \in i^* (X_f^B)$. So $ Q' \in i^* (X_f^B)$.

Conversely, Suppose $i^*$ is one-one and open. We show that $A$ is dense subring of $B$.

Let $I$ be an ideal of $B$ and $b \notin rad \ I$. So there exists a prime ideal $P_0 \supseteq I$, $b \notin P_0$. This implies that $P_0 \in X_b^B$.
Let $Q_0 \in i^* (P_0) = P_0 \cap A \in i^* (X_b^B)$. Since $i^*$ is open, $i^*(X_b^B)$ is open in $i^*(spec \ B)$. Hence there exists $a \in A$ such that

$Q_0 \in X_a^A \cap i^* (\X(B) \subseteq  i^*(X_b^B)$. ...  \  \  (1)

Now we claim that for any prime ideal $P$ of $B$,

$b \in P \imply a \in P$.

For that suppose $a \notin P$. So $a \notin P \cap A$. Thus $P \cap A \in X_a^A$. Also $P \cap A = i^* (P) \in i^*(\X{B})$.

So $i^*(P) \in X_a^A \cap i^*(\X{B}) \subseteq i^* (X_b^B)$ \ ... by (1)

As $i^*$ is one-one, $P \in X_b^B$. Hence $b \notin P$. So we proved our claim.

But the above claim implies that $a \in rad \ (b)$, where $(b)$ is the ideal generated by $b$. So there exists a natural number $n$, $a^n \in (b)$. Thus $a^n = ub$ for some $u \in B$. Clearly $ub \in A$. Now $u \notin P_0$ because if $u \in P_0$, then $a^n$ and hence $a$ is in $P_0$. So $a \in P_0 \cap A = Q_0$ which is a contradiction as by (1), $Q_0 \in X_a^A$. So $u \notin P_0$ and hence $ u \notin rad I$.

So $A$ is dense in $B$.
\end{proof}

We have the following immediate corollary.

\begin{corollary}\label{4.2}
$A$ is dense in $B$ if and only if $\X{B}$ is densely embedded in $\X{A}$ via the embedding $i^*$.
\end{corollary}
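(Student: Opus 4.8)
The plan is to read this corollary off directly from Theorem~\ref{4.1} together with the standing facts recorded in Theorem~\ref{2.5}. Since $A$ is a subring of $B$, the inclusion $i\colon A\to B$ is injective, so by Theorem~\ref{2.5}(a) the induced map $i^*\colon \X{B}\to\X{A}$ is continuous and by Theorem~\ref{2.5}(b) its image $i^*(\X{B})$ is dense in $\X{A}$. Thus the density of the image is automatic and carries no extra content; what the phrase ``$\X{B}$ is densely embedded in $\X{A}$ via $i^*$'' really asserts is that $i^*$ is a homeomorphism onto its (dense) image, i.e.\ a topological embedding. So the whole corollary reduces to: $A$ is dense in $B$ if and only if $i^*$ is a topological embedding.

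First I would prove the forward implication. Assume $A$ is dense in $B$. By Theorem~\ref{4.1}, $i^*$ is one--one and open onto its image, meaning $i^*(U)$ is open in the subspace $i^*(\X{B})$ whenever $U$ is open in $\X{B}$. Being also continuous (Theorem~\ref{2.5}(a)) and injective, $i^*$ is then a homeomorphism of $\X{B}$ onto $i^*(\X{B})$: injectivity makes the set map $(i^*)^{-1}\colon i^*(\X{B})\to\X{B}$ well defined, continuity of $i^*$ gives continuity of the map $\X{B}\to i^*(\X{B})$, and openness onto the image is precisely the statement that $(i^*)^{-1}$ is continuous, since $\bigl((i^*)^{-1}\bigr)^{-1}(U)=i^*(U)$. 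Together with the density already noted, this says $\X{B}$ is densely embedded in $\X{A}$ via $i^*$.

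Conversely, suppose $\X{B}$ is densely embedded in $\X{A}$ via $i^*$. In particular $i^*$ is a topological embedding, hence one--one, and for every open $U\subseteq\X{B}$ the set $i^*(U)$ is open in the subspace $i^*(\X{B})$. These are exactly the two hypotheses appearing in the ``if'' direction of Theorem~\ref{4.1}, so $A$ is dense in $B$.

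I do not anticipate any real obstacle: the only point deserving care is the elementary observation that a continuous, injective, relatively open map is a homeomorphism onto its image, and once that is in hand the corollary is just a repackaging of Theorem~\ref{4.1} with the automatic density from Theorem~\ref{2.5}(b) folded in.
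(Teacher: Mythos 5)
Your proposal is correct and follows essentially the same route as the paper: it invokes Theorem~\ref{2.5} for continuity of $i^*$ and density of its image, and Theorem~\ref{4.1} for the equivalence of denseness of $A$ in $B$ with $i^*$ being injective and open onto its image, in both directions. The only difference is that you spell out explicitly why a continuous, injective, relatively open map is a homeomorphism onto its image, which the paper leaves implicit.
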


\begin{proof}
According to Chapter 1, Excercises 21 (i and iv)of \cite{AM}, $i^*$ is continuous and $i^* (\X{B})$ is dense in $\X{A}$. Since $A$ is dense in $B$, $i^*$ is one-one and open. So $i^*$ is embedding. Hence $ \X{B}$ is densely embedded in $\X{A}$. Conversely if $i^*$ is an embedding, then in particular $i^*$ is open and one-one. So by the above theorem \ref{4.1}, $A$ is dense in $B$.
\end{proof}

\begin{corollary}\label{4.3}
Let $A$ be dense in B and $M$ be a maximal ideal of $B$. Then there does not exists any prime ideal $P$ of $B$ so that $ M \cap A \subsetneqq P \cap A$.
\end{corollary}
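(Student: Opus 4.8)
The plan is to deduce this immediately from Lemma \ref{3.5} together with the maximality of $M$, arguing by contradiction. Suppose there were a prime ideal $P$ of $B$ with $M \cap A \subsetneqq P \cap A$. In particular $M \cap A \subseteq P \cap A$, and since both $M$ and $P$ are prime ideals of $B$ (every maximal ideal being prime), Lemma \ref{3.5} applies to the pair $(M,P)$ and yields $M \subseteq P$.

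Next I would use that $P$, being prime, is a proper ideal of $B$, while $M$ is maximal; hence the chain $M \subseteq P \subsetneqq B$ forces $P = M$. But then $P \cap A = M \cap A$, which contradicts the assumed strict inclusion $M \cap A \subsetneqq P \cap A$. Therefore no such $P$ can exist, which is the assertion.

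There is essentially no obstacle: the corollary is a direct application of Lemma \ref{3.5} and the definition of maximality. The only points needing (minimal) care are to feed Lemma \ref{3.5} the primes in the correct order, taking the ``smaller'' one to be $M$, and to recall that a prime ideal is by definition proper, so that the maximality of $M$ can legitimately be invoked to conclude $P = M$.
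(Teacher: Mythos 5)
Your argument is correct, but it takes a different route from the paper. The paper deduces the corollary from its embedding theorem (Theorem \ref{4.1} / Corollary \ref{4.2}): since $A$ is dense, $i^*$ is an embedding, so $\{i^*(M)\}=\{M\cap A\}$ is closed in $i^*(\X{B})$ because $\{M\}$ is closed in $\X{B}$; a prime $P$ of $B$ with $M\cap A \subsetneqq P\cap A$ would place $P\cap A$ in the closure of $\{M\cap A\}$ inside $i^*(\X{B})$, a contradiction. You instead bypass the topology entirely: from $M\cap A\subseteq P\cap A$, Lemma \ref{3.5} (applied with $M$ in the role of the ``smaller'' prime, which you do in the correct order) gives $M\subseteq P$, and maximality of $M$ together with properness of the prime $P$ forces $P=M$, contradicting strictness. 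Both proofs are sound; yours is more elementary and shows the corollary needs only Lemma \ref{3.5} (hence only the order-reflection consequence of density), not the full openness of $i^*$, while the paper's version fits the corollary into its topological density--embedding framework, with maximality of $M$ entering only through the closedness of the singleton $\{M\}$ in $\X{B}$.
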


\begin{proof} Since $A$ is dense in $B$, the map $i^* : \X{B} \to \X{A}$ is an embedding. Hence $\{i^*(M)\}$ is closed in $i^* (\X{B})$ as $\{M\}$ is closed in $\X{B}$. For any $P \in \X{B}$, $M\cap A \subsetneqq P \cap A$, then $P \cap A \in cl_{i^*(\X{B})} \{M\cap A\}$. This is not possible as $\{i^*(M)\}$ is closed in $i^* (\X{B})$.
\end{proof}

We shall investigate now the effect of denseness on maximal spectrum of a ring. In \cite{MO71}, Theorem 1.6 tells that if $A$ is a pm-ring then $Max  (A) $ is homeomorphic with $Max (B)$ with respect to the map $\lambda$ which sends $M$ to the unique maximal ideal of $A$ containing $M \cap A$ if and only if for any two distinct maximal ideals $M,M'$ of $B$, $M \cap A + M \cap A = A$.

We begin with the following definition.

\begin{definition}
A subring $A$  of a ring $B$  is called weak completely normal with respect to  $B$  (henceforth referred as weak completely normal subring of $B$) if  for any two distinct maximal ideal $M \neq M'$,  whenever $M \cap A$ and $M' \cap A$ are non comparable, $cl_{\X{A}} \{M \cap A\} \cap cl_{\X{A}} \{M' \cap A\} = \emptyset$. A ring $A$ is called weak completely normal if $A$ is weak completely normal subring of any ring containing it.
\end{definition}

\begin{theorem} A ring $A$ is weak completely normal if and only if for any two non-comparable prime ideals $P$ and $P'$ of $A$, $cl_{\X{A}} \{P\} \cap cl_{\X{A}} \{P'\} = \emptyset$.
\end{theorem}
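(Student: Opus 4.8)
The plan is to observe first that the right-hand condition is exactly complete normality of $\X{A}$: since $cl_{\X{A}}\{P\}$ is the set of primes of $A$ containing $P$, the intersection $cl_{\X{A}}\{P\}\cap cl_{\X{A}}\{P'\}$ is the set of primes of $A$ lying above both $P$ and $P'$, so it is empty for every non-comparable pair precisely when no prime of $A$ contains two non-comparable primes --- equivalently, when the primes below any given prime of $A$ are totally ordered by inclusion, which is what it means for $\X{A}$ to be completely normal (cf. Section~2). Granting this reading, the implication $(\Leftarrow)$ is purely formal: if $B$ is any ring containing $A$ and $M\neq M'$ are maximal ideals of $B$ with $M\cap A$ and $M'\cap A$ non-comparable, then $M\cap A$ and $M'\cap A$ are non-comparable \emph{prime} ideals of $A$, so the hypothesis gives $cl_{\X{A}}\{M\cap A\}\cap cl_{\X{A}}\{M'\cap A\}=\emptyset$; since $B$ was arbitrary, $A$ is weak completely normal.

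For $(\Rightarrow)$ I would argue by contraposition: assuming $\X{A}$ is not completely normal, I would exhibit a ring containing $A$ that witnesses the failure of weak complete normality. By assumption there are non-comparable primes $P,P'$ of $A$ both contained in some prime, hence in some maximal ideal $M_0$, of $A$. The tempting witness is the localization $S^{-1}A$ with $S=A\bs(P\cup P')$ (a multiplicatively closed set whose only maximal ideals are the extensions of $P$ and of $P'$), but the canonical map $A\to S^{-1}A$ need not be injective, so $A$ need not sit inside it. To get around this I would instead take
\[
  B \;:=\; A \times A_P \times A_{P'}
\]
(with $A_P$, $A_{P'}$ the localizations of $A$ at $P$ and at $P'$ in the last two slots) and embed $A$ into $B$ by $a\mapsto(a,\,a/1,\,a/1)$; this map preserves $1$ and is injective because of the first coordinate, so $A$ is genuinely a subring of $B$. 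Since $A_P$ is local with maximal ideal $PA_P$ (and likewise $A_{P'}$ with $P'A_{P'}$), the ideals $M:=A\times PA_P\times A_{P'}$ and $M':=A\times A_P\times P'A_{P'}$ are distinct maximal ideals of $B$, and chasing the embedding gives $M\cap A=P$ and $M'\cap A=P'$. Now $P$ and $P'$ are non-comparable, yet $M_0$ contains both, so $M_0\in cl_{\X{A}}\{M\cap A\}\cap cl_{\X{A}}\{M'\cap A\}\neq\emptyset$; hence $A$ is not weak completely normal with respect to $B$, which is what the contrapositive requires.

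The only genuinely delicate step is this choice of witnessing ring in $(\Rightarrow)$: the naive localization fails to contain $A$ once $S$ contains zero-divisors, and the remedy is to keep $A$ itself as a first factor (to restore injectivity) while using the two local rings $A_P,A_{P'}$ as the remaining factors (to supply maximal ideals contracting to $P$ and to $P'$). Everything else I would state without detailed verification: that $cl_{\X{A}}\{P\}=\{Q\in\X{A}:Q\supseteq P\}$, that contractions of prime ideals are prime, that a localization at a prime is a local ring, the description of the maximal ideals of a finite direct product, and the remark that $P$ and $P'$ being proper forces $1\in S$ so that $A_P$, $A_{P'}$ (and $S^{-1}A$) are nonzero.
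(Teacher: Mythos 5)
Your proof is correct, and the forward direction takes a genuinely different route from the paper's. The paper argues directly rather than by contraposition: it localizes at $D=(A\bs P)\cap(A\bs P')$, applies weak complete normality of $A$ as a subring of $D^{-1}A$, takes maximal ideals $M\supseteq D^{-1}P$ and $M'\supseteq D^{-1}P'$, and shows $M\cap A$, $M'\cap A$ are non-comparable via a ``otherwise $P\cup P'$ would be a prime ideal'' contradiction before invoking the definition. You instead negate the right-hand condition and build the witnessing extension $B=A\times A_P\times A_{P'}$ with $a\mapsto(a,a/1,a/1)$. This buys two things. First, it sidesteps exactly the point you flag: the canonical map $A\to D^{-1}A$ need not be injective when $D$ meets zero-divisors of $A$, so the paper's assertion that $A$ is a (weak completely normal) subring of $D^{-1}A$ tacitly needs an embedding that is not checked there; your first factor restores injectivity for free, and $1$ goes to $1_B$. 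Second, your two maximal ideals contract \emph{exactly} to $P$ and $P'$ (since $\{a\in A: a/1\in PA_P\}=P$), so non-comparability of the traces is immediate and the paper's union-of-primes detour is unnecessary. The backward direction is the same one-line observation as in the paper.

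One caveat: delete the opening claim that the right-hand condition ``is exactly complete normality of $\X{A}$.'' It is not: that condition says the primes \emph{below} any given prime of $A$ form a chain, whereas complete normality of $\X{A}$, as the paper uses it (see its later theorem that weak completely normal $+$ pm implies completely normal), amounts to the primes \emph{above} a given prime forming a chain. For instance $\mathbb{Z}$ satisfies the right-hand condition while $spec \ \mathbb{Z}$ is not even normal, and $k[x,y]_{(x,y)}/(xy)$ has completely normal spectrum yet violates the condition at its two minimal primes. Fortunately your argument never uses this identification: what your contrapositive actually assumes is precisely the failure of the right-hand condition (two non-comparable primes with a common containing prime, hence a common containing maximal ideal), so the proof stands once that remark is removed.
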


\begin{proof} Sufficient part is trivial. Conversely suppose $A$ is weak completely normal. Let $P$ and $P'$  be two non-comparable prime ideals of $A$. Take $D = (A \bs P) \cap (A \bs P')$. Then $A$ is weak completely normal subring of the quotient ring $D^{-1} A$. Then  $D^{-1} P$ and  $D^{-1} P'$ are non-comparable prime ideal of  $D^{-1} A$. Then $D^{-1} P$ and  $D^{-1} P'$  are contained in two maximal ideals $M$ and $M'$ of  $D^{-1} A$. It is clear that $P \subseteq M\cap A$ and $P' \subseteq M'\cap A$. If $M\cap A$ and $ M' \cap A$ are comparable, then $P\cup P' \subseteq M\cap A$ or $M'\cap A$. Now for any $x \notin P\cup P'$, $ x \in D$, and hence is a unit of $B$. So $x$ can not be a member of both $M$ and $M'$ and hence not a member of both $M \cap A$ and $m'\cap A$.  Thus $P\cup P' = M\cap A$ or $M'\cap A$ and in either cases, $P \cup P'$ turns out to be a prime ideal of $A$ which is not true as $P$ and $P'$ are non-comparable. So $M\cap A$ and $M'\cap A$ are non comparable in $A$. This follows that $cl_{\X{A}} \{M \cap A\} \cap cl_{\X{A}} \{M' \cap A\} = \emptyset$. As  $P \subseteq M\cap A$ and $P' \subseteq M'\cap A$, $cl_{\X{A}} \{P\} \cap cl_{\X{A}} \{P'\} = \emptyset$.
\end{proof}

From above theorem it follows that weak completely normals are precisely those rings, where any two non-comparable prime ideals are not contained in a maximal ideal . We have already observed that every completely normal ring is pm ring, converse may not be true.  However the converse is trivially true if we assume weak completely normal. So we have the following theorem.

\begin{theorem} Every weak completely normal and pm-ring is completely normal.

\end{theorem}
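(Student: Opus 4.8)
The plan is to unwind both hypotheses against the known descriptions of the notions involved and to observe that together they reproduce, essentially verbatim, what it means to be completely normal. By the theorem and remark just proved, saying that $A$ is weak completely normal is the same as saying that no two non-comparable prime ideals of $A$ are contained in a common prime ideal; equivalently, that for every maximal ideal $M$ the family $\mathcal{P}_M$ of prime ideals contained in $M$ is a chain under inclusion. On the other side, by Theorem \ref{2.1}(d), saying that $A$ is a pm-ring is the same as saying that $\X{A}$ is normal. Since complete normality of $\X{A}$ (following \cite{ST10}, and made ring-theoretic in Theorem \ref{2.4}) amounts precisely to $\X{A}$ being normal together with the prime ideals below each maximal ideal forming a chain, the conjunction of the two hypotheses is exactly complete normality; this is why the implication is immediate.

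Carrying this out, I would argue in three short steps. First, from the preceding theorem: if $P$ and $P'$ are non-comparable primes of $A$, then $cl_{\X{A}}\{P\} \cap cl_{\X{A}}\{P'\} = \emptyset$, so no prime ideal of $A$ contains both; contrapositively, whenever two primes $P_1, P_2$ both lie inside a maximal ideal $M$ they must be comparable, i.e. each $\mathcal{P}_M$ is a chain. Second, using that $A$ is pm: an arbitrary prime $Q$ lies in the unique maximal ideal $\mu_A(Q)$, hence $\mathcal{P}_Q \subseteq \mathcal{P}_{\mu_A(Q)}$ is itself a chain, so the set of primes below any prime is a chain. Third, combine: pm gives normality of $\X{A}$ by Theorem \ref{2.1}(d), and the first two steps give the chain condition; feeding both into the characterization of completely normal spectra yields that $\X{A}$ is completely normal, i.e. that $A$ is completely normal.

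I do not expect any genuine obstacle here — the statement really is, as the surrounding text says, essentially a matter of matching definitions. The one point requiring care is to cite the characterization of a completely normal spectrum in the right form, namely that $\X{A}$ is completely normal exactly when it is normal and every $\mathcal{P}_M$ is linearly ordered, so that the two hypotheses line up with the definition with nothing to spare. If one prefers to avoid that citation and argue directly, the work shifts to verifying Schwartz--Tressl's identity of Theorem \ref{2.4}: for given $s, a \in A$ one must exhibit $x, x' \in A$ and $k \in \mathbb{N}$ with $(s^k - xsa)(s^k - x'(s^2 - sa)) = 0$, and the natural route is to check this locally at each prime $Q$, where the chain condition on $\mathcal{P}_Q$ forces one of the two factors to vanish once a suitable power is taken; patching the local solutions back together is then the only step with any content.
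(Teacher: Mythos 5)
There is a genuine gap, and it sits exactly where you flagged "the one point requiring care": the characterization of complete normality you invoke is not correct, and it is not the one in Schwartz--Tressl. For spectral spaces, complete normality of $\X{A}$ is equivalent to the closure of every point being a chain under specialization, i.e.\ to the requirement that for every prime $P$ the primes \emph{containing} $P$ are totally ordered; it is \emph{not} equivalent to "$\X{A}$ is normal and every $\mathcal{P}_M$ (the primes \emph{below} a maximal ideal $M$) is a chain." The paper's own motivating example already refutes your biconditional: $C(X)$ is completely normal (primes containing a given prime form a chain), yet a maximal ideal of $C(X)$ typically contains many pairwise incomparable prime ideals, so $\mathcal{P}_M$ is far from a chain. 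Consequently your steps establish the wrong condition: step 1 gives that each $\mathcal{P}_M$ is a chain, and step 2 only propagates this downward (primes \emph{below} any prime form a chain), whereas what complete normality demands is the upward chain condition on $V(P)=cl_{\X{A}}\{P\}$ — a condition you never derive. The implication you actually use (pm plus all $\mathcal{P}_M$ chains implies completely normal) happens to be true, but it is not a citable result; proving it is precisely the missing step.

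That missing step is short and is the paper's whole proof: given a prime $P$ and primes $Q_1,Q_2\supseteq P$, each $Q_i$ lies in some maximal ideal containing $P$, which by the pm hypothesis (Theorem \ref{2.1}) must be the unique maximal ideal $\mu_A(P)$; so $Q_1,Q_2\in\mathcal{P}_{\mu_A(P)}$, and weak complete normality (via the preceding theorem: non-comparable primes have disjoint closures, hence no common prime above them) forces $Q_1$ and $Q_2$ to be comparable. Thus the primes containing any $P$ form a chain and $A$ is completely normal. Your fallback route through the identity of Theorem \ref{2.4} does not rescue the argument as written: checking the identity "locally at each prime" and then "patching the local solutions" is not a routine step in an arbitrary commutative ring (the elements $x,x',k$ must be chosen globally), and in any case the chain condition you would feed into it is again the downward one on $\mathcal{P}_Q$ rather than the upward one that the characterization encodes.
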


\begin{proof}  If $A$ is weak completely normal, any two non-comparable prime ideals are contained in distinct maximal ideals. As $A$ is pm, for any prime ideal $P$, all prime ideals containing $P$ is contained in unique maximal ideal. So all prime ideals containing $P$ is comparable to each other. Hence $A$ is completely normal.
\end{proof}

\begin{lemma}\label{4.4}
If $A$ is a subring of $B$ such that for any two distinct maximal ideal $M, M'$ of $B$, $M \cap A + M \cap A = A$, then for any two distinct maximal ideal $M \neq M'$,  $M \cap A$ and $M' \cap A$ are non comparable and $A$ is weak completely normal  subring of $B$
\end{lemma}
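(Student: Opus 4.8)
The plan is to unwind the two pieces of the conclusion and reduce everything to two elementary facts about the Zariski topology on $\X{A}$: contractions of proper ideals stay proper, and the closure of a point $\{P\}$ in $\X{A}$ is exactly the set of prime ideals of $A$ containing $P$ (equivalently, $cl_{\X{A}}\{P_1\}\cap cl_{\X{A}}\{P_2\}$ is the set of primes containing $P_1+P_2$). Note first that the hypothesis should read $M\cap A + M'\cap A = A$ for distinct maximal ideals $M,M'$ of $B$, and that each $M\cap A$, $M'\cap A$ is a (proper) prime ideal of $A$ since contraction of a prime ideal is prime and $1\notin M, 1\notin M'$.

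First I would prove non-comparability. Suppose toward a contradiction that $M\cap A$ and $M'\cap A$ are comparable, say $M\cap A\subseteq M'\cap A$ (the reverse inclusion is symmetric, and equality is a special case). Then $(M\cap A)+(M'\cap A)=M'\cap A$, and this ideal is proper in $A$ because $1\notin M'$ forces $1\notin M'\cap A$. This contradicts $(M\cap A)+(M'\cap A)=A$. Hence for all distinct $M,M'\in max\,(B)$ the contractions $M\cap A$ and $M'\cap A$ are non-comparable; in particular they are never equal.

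For the second assertion, by the first part the hypothesis ``$M\cap A$ and $M'\cap A$ non-comparable'' in the definition of weak complete normality is automatically satisfied for every pair of distinct maximal ideals of $B$, so it remains to check $cl_{\X{A}}\{M\cap A\}\cap cl_{\X{A}}\{M'\cap A\}=\emptyset$. Since $cl_{\X{A}}\{M\cap A\}$ is the set of prime ideals of $A$ containing $M\cap A$, and similarly for $M'\cap A$, a prime ideal $Q$ of $A$ lies in both closures if and only if $Q\supseteq (M\cap A)+(M'\cap A)=A$, which is impossible because $1\in A$. Therefore the intersection of the two closures is empty, so $A$ is weak completely normal with respect to $B$.

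I do not expect a genuine obstacle here; the only care needed is to recall the correct description of the point-closure in the Zariski topology and to observe that the contraction of a proper ideal of $B$ to $A$ is again proper, which is exactly what makes the comaximality hypothesis incompatible with comparability.
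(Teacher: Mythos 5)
Your proof is correct and takes essentially the same route as the paper: a prime ideal lying in both point-closures in $\X{A}$ would contain $(M\cap A)+(M'\cap A)=A$, which is impossible for a proper ideal. You additionally write out the non-comparability claim explicitly (the paper's proof leaves that part implicit), but the core argument is identical.
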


\begin{proof}
 Let $cl_{\X{A}} \{M \cap A\} \cap cl_{\X{A}} \{M' \cap A\} \neq \emptyset$. Choose $N \in cl_{\X{A}} \{M \cap A\} \cap cl_{\X{A}} \{M' \cap A\} $. Then $M \cap A, M'\cap A \subseteq N$. By hypothesis, $N=A$, a contradiction.
\end{proof}

In the next theorem, we shall show that the converse is true if $A$ is dense in  $B$.

\begin{theorem}\label{4.5}
Let $A$ be a  dense, weak completely normal subring of $B$. \\

Then for any two distinct maximal ideal $M, M'$ of $B$, $M \cap A + M \cap A = A$.
\end{theorem}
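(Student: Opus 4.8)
The plan is to argue by contradiction, turning the failure of $M\cap A + M'\cap A = A$ into the existence of a point lying in the intersection of two closures in $\X{A}$, which weak complete normality forbids. So suppose $M\cap A + M'\cap A \neq A$. This is then a proper ideal of $A$, hence contained in some maximal ideal $N$ of $A$; in particular $M\cap A \subseteq N$ and $M'\cap A \subseteq N$.

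The next step is to verify that $M\cap A$ and $M'\cap A$ are non-comparable, which is exactly where density is used. Since $M$ and $M'$ are distinct maximal ideals of $B$ they are non-comparable, and as $A$ is dense in $B$, Remark \ref{3.4} says that their images $i^*(M) = M\cap A$ and $i^*(M') = M'\cap A$ remain non-comparable. (Equivalently, one could invoke Lemma \ref{3.5}: if say $M\cap A \subseteq M'\cap A$ then $M \subseteq M'$, forcing $M = M'$, a contradiction.)

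Now I would apply the hypothesis that $A$ is weak completely normal with respect to $B$: since $M \neq M'$ and $M\cap A$, $M'\cap A$ are non-comparable, the definition gives $cl_{\X{A}}\{M\cap A\} \cap cl_{\X{A}}\{M'\cap A\} = \emptyset$. On the other hand, in the Zariski topology the closure of a singleton $\{P\}$ in $\X{A}$ is precisely $V^A_P = \{Q \in \X{A} : Q \supseteq P\}$; hence $N \supseteq M\cap A$ gives $N \in cl_{\X{A}}\{M\cap A\}$ and $N \supseteq M'\cap A$ gives $N \in cl_{\X{A}}\{M'\cap A\}$. Thus $N$ lies in an intersection that is supposed to be empty — a contradiction. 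Therefore no such $N$ exists and $M\cap A + M'\cap A = A$.

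The argument is short once these pieces are in place; there is no serious obstacle, only the bookkeeping that the closure of a point in $\X{A}$ is its set of prime overideals, and the observation that "distinct maximal ideals are non-comparable" combined with density genuinely yields non-comparability of the contractions. That last point is the crucial use of the density hypothesis — without it the contractions could collapse into a comparable pair and the conclusion would fail, so it is worth stating explicitly in the write-up.
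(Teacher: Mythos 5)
Your proposal is correct and follows essentially the same route as the paper's proof: density (via Remark \ref{3.4}, equivalently Lemma \ref{3.5}) gives non-comparability of $M\cap A$ and $M'\cap A$, weak complete normality makes their point-closures disjoint, and any prime (in your case maximal) ideal containing $M\cap A + M'\cap A$ would lie in both closures, a contradiction. The only difference — taking a maximal ideal $N$ rather than an arbitrary prime containing the sum — is immaterial.
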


\begin{proof}
As $A$ is dense in $B$, for any two distinct maximal ideal $M, M'$ of $B$, $i^*(M)$ and $i^* (M')$ are non comparable. Now by hypothesis,$cl_{\X{A}} \{M \cap A\} \cap cl_{\X{A}} \{M' \cap A\} = \emptyset$. If $M \cap A + M' \cap A \neq A$, there exists a prime ideal $P$ of $A$ such that $M \cap A + M' \cap A \subseteq P$. This implies that both $M \cap A$ and $M' \cap A$ are contained in $P$. Hence $P \in  cl_{\X{A}} \{M \cap A\} \cap cl_{\X{A}} \{M' \cap A\}$, which is a contradiction.
\end{proof}

In particular both $C(X)$ and $C^* (X)$ are completely normal and weak completely normal. However complete normality of $C(X)$ or of any intermediate subrings follows more generally from the following theorem. It is to be noted that both complete normality

\begin{theorem}\label{4.7}
If $A$ is completely normal and dense subring of $B$, then $B$ is also completely normal and hence pm.
\end{theorem}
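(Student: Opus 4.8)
The plan is to show that complete normality of $B$ reduces to a statement about prime ideals of $B$, and then to transfer the corresponding statement from $A$ using the fact that $i^*:\X{B}\to\X{A}$ is an embedding (Corollary \ref{4.2}) whose image is dense. Recall from Theorem \ref{2.1} that $A$ being completely normal forces $A$ to be pm, so $\X{A}$ is normal, and from the discussion after Theorem \ref{4.1} that ``completely normal'' is exactly the condition that any two non-comparable prime ideals lie below no common prime. So I would first isolate the topological reformulation: a ring $C$ is completely normal iff whenever $P,P'\in\X{C}$ are non-comparable, then $cl_{\X{C}}\{P\}\cap cl_{\X{C}}\{P'\}=\emptyset$; equivalently no prime ideal of $C$ contains both $P$ and $P'$.

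First I would take two non-comparable prime ideals $P,P'$ of $B$ and suppose, for contradiction, that some prime ideal $R$ of $B$ contains both. Pushing down via $i^*$: by Remark \ref{3.4}, since $A$ is dense in $B$, the ideals $P\cap A$ and $P'\cap A$ are still non-comparable in $A$; and clearly $P\cap A\subseteq R\cap A$ and $P'\cap A\subseteq R\cap A$, so $R\cap A$ is a prime ideal of $A$ containing two non-comparable primes of $A$. This contradicts complete normality of $A$. Hence no such $R$ exists, i.e. $B$ is completely normal. (Alternatively, one can phrase this via the closures: $i^*(R)\in cl_{\X{A}}\{i^*(P)\}\cap cl_{\X{A}}\{i^*(P')\}$, contradicting the closure form of complete normality of $A$.) The ``hence pm'' clause is then immediate: a completely normal ring is pm, as already noted in the excerpt (every completely normal ring is a pm-ring).

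The main thing to be careful about — the only real content beyond bookkeeping — is the direction of the implications between ``complete normality'' and the prime-containment condition. Schwartz--Tressl give an algebraic criterion (Theorem \ref{2.4}), but for this argument I only need the topological characterization already used in the paper right after Theorem \ref{4.1}: complete normality of $\X{C}$ is equivalent to the statement that non-comparable primes have disjoint closures, equivalently generate (together) no prime ideal. Granting that, the proof is a two-line descent along $i^*$. I would therefore structure the write-up as: (i) recall the characterization; (ii) assume $R\supseteq P,P'$ in $B$ with $P,P'$ non-comparable; (iii) invoke Remark \ref{3.4} to keep $P\cap A,P'\cap A$ non-comparable; (iv) observe $R\cap A$ contains both, contradicting complete normality of $A$; (v) conclude $B$ completely normal, hence pm. I do not expect to need density in the strong Definition \ref{3.1} form directly here beyond what is already packaged into Remark \ref{3.4}.
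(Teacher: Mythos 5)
Your reduction to a prime-containment condition rests on a false characterization of complete normality, and this breaks the argument in both of the directions you need. The condition you state --- ``$P,P'$ non-comparable $\Rightarrow cl_{\X{C}}\{P\}\cap cl_{\X{C}}\{P'\}=\emptyset$, i.e.\ no prime of $C$ contains both'' --- is (essentially) the paper's characterization of \emph{weak} completely normal rings (the discussion you cite after Theorem \ref{4.1} is about that notion, and about maximal ideals), not of complete normality of $\X{C}$. Complete normality of a spectral space is governed by common \emph{generalizations}, not common specializations: $\X{C}$ is completely normal exactly when, for each prime $P$, the primes \emph{containing} $P$ form a chain (equivalently, $cl\{P\}$ is a specialization chain), which is how the paper itself uses the notion in the proof that weak completely normal $+$ pm implies completely normal. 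Concretely, both implications you rely on fail: $\mathbb{Z}$ satisfies your condition (non-comparable primes of $\mathbb{Z}$ are distinct maximal ideals with disjoint closures) yet $spec \ \mathbb{Z}$ is not even normal, so your condition on $B$ does not yield complete normality of $B$; and $C^*(X)$ is completely normal while a single maximal ideal of it typically contains two incomparable (minimal) primes, so complete normality of $A$ does not yield your condition for $A$, and step (iv) produces no contradiction.

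The descent idea itself is sound and can be repaired: if $P,P'$ are non-comparable primes of $B$ both containing a prime $R$ of $B$, then $P\cap A$, $P'\cap A$ are non-comparable by Remark \ref{3.4} and both contain the prime $R\cap A$, contradicting the chain condition for $A$. But to run this you must invoke the equivalence ``$\X{C}$ completely normal $\iff$ primes above any prime form a chain,'' whose nontrivial direction (chain condition $\Rightarrow$ complete normality, needed for $B$) is a theorem about spectral spaces that the paper never proves. The paper's own proof sidesteps all of this: by Corollary \ref{4.2}, density makes $i^*$ an embedding of $\X{B}$ into $\X{A}$; complete normality is a hereditary topological property, so $\X{B}$ is completely normal, hence normal, and $B$ is pm by Theorem \ref{2.1}. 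You should either adopt that heredity argument or correct the characterization and supply a reference for its hard direction.
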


\begin{proof}
As $A$ is dense in $B$, $spec \ B$ is densely embedded in $spec \ A$. As complete normality is hereditary, $spec \ B$ is completely normal. Every completely normal space is normal.  Hence $B$ is also pm-ring by theorem \ref{2.1}.
\end{proof}

\begin{theorem}\label{4.8}
If $A$ is pm, dense and weak completely normal subring $B$, then $Max (A)$ is homeomorphic with $Max (B)$.
\end{theorem}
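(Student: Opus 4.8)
The plan is to deduce the homeomorphism directly from the Marco--Orsatti criterion (Theorem~\ref{2.3}) once its hypothesis has been verified with the help of Theorem~\ref{4.5}. Since $A$ is assumed to be a pm-subring of $B$, Theorem~\ref{2.3} applies and produces a continuous closed surjection $\lambda : Max(B) \to Max(A)$, where $\lambda(M)$ is the unique maximal ideal of $A$ containing $M \cap A$; the same theorem records that $\lambda$ is a homeomorphism if and only if $M \cap A + M' \cap A = A$ for every pair of distinct maximal ideals $M, M'$ of $B$. So everything reduces to establishing that coprimality condition.

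For that I would invoke the remaining two hypotheses: $A$ is dense in $B$ and $A$ is weak completely normal with respect to $B$. These are precisely the assumptions of Theorem~\ref{4.5}, whose conclusion is exactly that $M \cap A + M' \cap A = A$ for any two distinct maximal ideals $M, M'$ of $B$. Feeding this back into Theorem~\ref{2.3} shows $\lambda$ is a homeomorphism, hence $Max(A)$ and $Max(B)$ are homeomorphic, which is the assertion.

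The point worth emphasising is that each hypothesis plays a distinct and necessary role, so there is no genuine obstacle beyond correctly citing the two ingredients: pm-ness of $A$ is what makes $\lambda$ well defined, continuous, closed and onto, while density and weak complete normality are exactly what Theorem~\ref{4.5} consumes. If one wanted a self-contained argument, the substantive step would be re-proving Theorem~\ref{4.5}: density forces $M \cap A$ and $M' \cap A$ to be non-comparable (Remark~\ref{3.4}), and then weak complete normality makes $cl_{\X{A}}\{M \cap A\}$ and $cl_{\X{A}}\{M' \cap A\}$ disjoint; any common prime $P \supseteq M \cap A + M' \cap A$ would lie in that intersection, so no such $P$ exists and $M \cap A + M' \cap A = A$.
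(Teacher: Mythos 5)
Your proposal is correct and follows exactly the paper's own route: Theorem~\ref{4.5} supplies the coprimality condition $M \cap A + M' \cap A = A$ for distinct maximal ideals of $B$, and Theorem~\ref{2.3} then yields that $\lambda$ is a homeomorphism. Your optional sketch of re-proving Theorem~\ref{4.5} also matches the paper's argument, so there is nothing to add.
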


\begin{proof}
Since $A$ is a pm, dense and weak completely normal subring of $B$, by theorem \ref{4.5},  for any two distinct maximal ideal $M, M'$ of $B$, $M \cap A + M \cap A = A$. Then by theorem \ref{2.3},  $Max (A)$ is homeomorphic with $Max (B)$.
\end{proof}

Now if $A$ is completely normal, then $A$ is weak completely normal and pm. Thus we have the following immediate corollary.

\begin{corollary}\label{4.9a} If $A$ is dense and completely normal subring of $B$, then then $Max (A)$ is homeomorphic with $Max (B)$.
\end{corollary}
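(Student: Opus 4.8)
The final statement to prove is Corollary~\ref{4.9a}: if $A$ is dense and completely normal subring of $B$, then $\mathrm{Max}(A)$ is homeomorphic with $\mathrm{Max}(B)$. My plan is to reduce this directly to Theorem~\ref{4.8}, which asserts exactly this conclusion under the hypotheses that $A$ is pm, dense, and weak completely normal with respect to $B$. Since density is already given, the only work is to extract the remaining two hypotheses from the single assumption that $A$ is completely normal.

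First I would recall that ``completely normal'' means $\X{A}$ is a completely normal topological space, and that such a space is in particular normal; hence by Theorem~\ref{2.1} (the equivalence of normality of $\mathrm{spec}\,A$ with $A$ being a pm-ring) we conclude that $A$ is a pm-ring. Second, I would invoke the observation recorded just before the statement of ``Every weak completely normal and pm-ring is completely normal'' — namely the theorem characterizing weak completely normal rings as those in which any two non-comparable prime ideals have disjoint closures in $\X{A}$. Complete normality of $\X{A}$ says precisely that any two separated subsets can be separated by disjoint open sets; applied to the (automatically separated, since neither closure meets the other point) pair of points given by two non-comparable primes $P,P'$, this yields $\mathrm{cl}_{\X{A}}\{P\}\cap\mathrm{cl}_{\X{A}}\{P'\}=\emptyset$, so $A$ is weak completely normal. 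Combining: $A$ is pm, dense in $B$, and weak completely normal, so Theorem~\ref{4.8} applies and gives the homeomorphism $\mathrm{Max}(A)\cong\mathrm{Max}(B)$.

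I do not expect any real obstacle here, since this is a bookkeeping corollary assembling earlier results; the only subtlety worth being careful about is the precise topological implication ``completely normal $\Rightarrow$ weak completely normal,'' which relies on the fact that two points with non-comparable closures are separated sets in the sense required by complete normality — this is exactly the content of the intermediate theorem on weak completely normal rings, so I would simply cite it rather than re-derive it. The proof is therefore just: complete normality gives pm (via Theorem~\ref{2.1}) and weak completely normal (via the characterization theorem), then apply Theorem~\ref{4.8}.

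\begin{proof}
Since $A$ is completely normal, $\X{A}$ is a completely normal space, hence normal, so by Theorem~\ref{2.1} $A$ is a pm-ring. Moreover, for any two non-comparable prime ideals $P,P'$ of $A$, neither of $\mathrm{cl}_{\X{A}}\{P\}$, $\mathrm{cl}_{\X{A}}\{P'\}$ contains the other point, so these are separated sets in $\X{A}$; complete normality then yields $\mathrm{cl}_{\X{A}}\{P\}\cap\mathrm{cl}_{\X{A}}\{P'\}=\emptyset$, so by the characterization of weak completely normal rings, $A$ is weak completely normal. Thus $A$ is pm, dense, and weak completely normal subring of $B$, and Theorem~\ref{4.8} gives that $\mathrm{Max}(A)$ is homeomorphic with $\mathrm{Max}(B)$.
\end{proof}
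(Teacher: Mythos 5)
Your reduction is the one the paper intends: the corollary is deduced from Theorem \ref{4.8} via the assertion that a completely normal ring is pm and weak completely normal, and the pm half of your argument is correct (a completely normal space is normal, so Theorem \ref{2.1} applies). The gap is the step ``these are separated sets in $\X{A}$; complete normality then yields $\mathrm{cl}_{\X{A}}\{P\}\cap \mathrm{cl}_{\X{A}}\{P'\}=\emptyset$.'' Complete normality of a topological space guarantees that two separated sets have disjoint \emph{open neighbourhoods}; it says nothing about their closures being disjoint. In $\X{A}$ the closure of a point $P$ is $V(P)$, the set of primes containing $P$, so the conclusion you need means that no prime of $A$ contains both $P$ and $P'$, and this does not follow from complete normality. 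Concretely, take $A=\bigl(k[x,y]/(xy)\bigr)_{(x,y)}$: its spectrum has three points, namely the two non-comparable minimal primes and the maximal ideal; this space is completely normal (the two minimal primes form the only separated pair, and each is by itself an open set), yet the closures of the two minimal primes both contain the maximal ideal. Even in the motivating example, $spec \ C(X)$ is completely normal while two distinct minimal primes lying inside one maximal ideal have intersecting closures. The unlabeled characterization theorem you cite is an equivalence between weak complete normality and the disjoint-closure condition; it is not a bridge from complete normality, so your derivation of ``weak completely normal'' does not go through.

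To be fair, the paper supplies no proof either: the corollary is preceded only by the unproved sentence ``if $A$ is completely normal, then $A$ is weak completely normal and pm,'' so you are mirroring the paper's route, but the justification you add is invalid, and the implication itself is false. Worse, the same example shows the gap cannot be filled as stated: $A=\bigl(k[x,y]/(xy)\bigr)_{(x,y)}$ is dense in its total ring of fractions $B\cong k(x)\times k(y)$ (a ring of fractions, hence dense by the paper's Example 2) and is completely normal, yet $max\,(A)$ is a single point while $max\,(B)$ has two points. So any correct argument for a conclusion of this type must use more than complete normality of $A$ — what Theorem \ref{4.8} (via Theorem \ref{4.5} and Theorem \ref{2.3}) actually needs is weak complete normality of $A$ \emph{with respect to} $B$, i.e.\ that no prime of $A$ contains both $M\cap A$ and $M'\cap A$ for distinct maximal ideals $M,M'$ of $B$, and neither your argument nor the paper establishes this from the stated hypotheses.
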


Now from the above corollary \ref{4.9a}, it directly follows that as $C^*(X)$ is completely normal and dense in $C(X)$, their structure spaces must be homeomorphic. The following corollary further takes care of the structure spaces of intermediate subrings of $C(X)$ also.

\begin{corollary} \label{4.9}
 Let $A$ be a completely normal and dense subring of $B$. Then maximal spectrum of all intermediate subrings between $A$ and $B$, where $A$ is dense, are homeomorphic.
\end{corollary}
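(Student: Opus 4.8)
The plan is to reduce everything to Corollary \ref{4.9a}, which already says: if $A$ is completely normal and dense in a ring $B$, then $Max(A)$ is homeomorphic with $Max(B)$. So let $S$ and $T$ be any two intermediate subrings, i.e. $A \subseteq S \subseteq B$ and $A \subseteq T \subseteq B$, with the standing hypothesis that $A$ is dense in each of them. (This is exactly why the statement carries the clause ``where $A$ is dense''; recall the authors flagged that they do not know whether density of $A$ in $B$ automatically gives density of $A$ in every intermediate subring, so one must assume it.) The goal is $Max(S) \cong Max(T)$, and it suffices to show $Max(S) \cong Max(A)$ for every such $S$, since then $Max(S) \cong Max(A) \cong Max(T)$.

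First I would fix one intermediate subring $S$ with $A$ dense in $S$. I claim $A$ is completely normal and dense \emph{as a subring of $S$}, so that Corollary \ref{4.9a} applies directly with $B$ replaced by $S$. Density of $A$ in $S$ is the hypothesis. Complete normality is an \emph{intrinsic} property of the ring $A$ (it is the statement that $\X{A}$ is a completely normal space; equivalently the algebraic identity in Theorem \ref{2.4}), so it does not depend on the overring at all — $A$ is just as completely normal viewed inside $S$ as it is viewed inside $B$. Hence Corollary \ref{4.9a} gives $Max(A) \cong Max(S)$. Running this for $S$ and again for $T$ yields $Max(S) \cong Max(A) \cong Max(T)$, and composing the two homeomorphisms finishes the proof.

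I would also remark, to make the chain fully explicit, that the homeomorphisms here are the maps $\lambda$ of Theorem \ref{2.3}: for $A \subseteq S$ the map $Max(S) \to Max(A)$ sending a maximal ideal $N$ of $S$ to the unique maximal ideal of $A$ containing $N \cap A$ is the homeomorphism produced (via Theorem \ref{4.5} and Theorem \ref{2.3}) by ``$A$ pm, dense, weak completely normal in $S$'', all three of which we have: $A$ is pm and weak completely normal because it is completely normal (the two implications recorded just before Lemma \ref{4.4}), and $A$ is dense in $S$ by assumption. So the desired homeomorphism $Max(S) \to Max(T)$ is $\lambda_{A,T}^{-1} \circ \lambda_{A,S}$.

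There is no real obstacle here; the only thing to be careful about is the logical status of ``complete normality'' — it must be used as a property of $A$ alone, not as a property relative to $B$ — and the fact that ``dense'' is the hypothesis being carried along to each intermediate subring rather than something to be derived. Once those two points are isolated, the statement is just Corollary \ref{4.9a} applied twice together with the transitivity of ``homeomorphic''.
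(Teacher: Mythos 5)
Your proposal is correct and follows essentially the same route as the paper: fix an intermediate subring in which $A$ is dense, observe that complete normality (hence pm and weak complete normality) is an intrinsic property of $A$, apply Corollary \ref{4.9a} (equivalently Theorem \ref{4.8}) to get $Max(A)$ homeomorphic with the maximal spectrum of that intermediate subring, and conclude by transitivity. The explicit identification of the homeomorphism as the $\lambda$ map of Theorem \ref{2.3} is a welcome extra detail but does not change the argument.
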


\begin{proof}
Let $C$ be intermediate subring of $B$ and $A$ is dense in $C$ . As is completely normal, $A$ is pm and completely normal subring of $C$. By theorem \ref{4.8} $max (C)$ and $max (A)$ are homeomorphic with $max (A)$. Hence maximal spectrum of all intermediate subrings between $A$ and $B$, where $A$ is dense, are homeomorphic.
\end{proof}

This is a reason behind the structure space of intermediate subrings of $C(X)$ to be homeomorphic. Infact $C^*(X)$ is dense in any intermediate subring of $C(X)$. Let $A$ be an intermediate subring of $C(X)$ and let $I$ be an ideal of $A$ such that $f \notin rad (I)$. That means there exists a prime ideal $P$ in $A$ such that $f \notin P$. Now $1+f^2$ is in $A$. Clearly $\frac{1}{1+f^2}$ is in $C^*(X)$, hence is in $A$ also. Thus $\frac{f}{1+f^2} \in A$. Now $\frac{f}{1+f^2}$ is not in $P$. If it would be so, then $f \in P$, as $1+f^2$ is a unit of $A$. This would lead to a contradiction. Hence $\frac{f}{1+f^2}$ is not in $rad (I)$ also. Now $f. \frac{f}{1+f^2} $ is in $C^*(X)$. So $C^*(X)$ is dense in $A$. Moreover $C^*(X)$ is completely normal. The rest follows from the corollary \ref{4.9} above.

Mere denseness of a subring of a ring does not ensure the existence of homeomorphism between their maximal spectrums but we could not provide any counter-example in support of the theorem \ref{4.8}. However Mere densenss implies the following theorem. In prior to that we may recall that the space of minimal prime ideal of a ring $A$ with Zarisky topology is called the minmal spectrum of $A$ and is denoted as $min (A)$.

\begin{theorem}
If $A$ is a dense subring of $B$, then $min (A)$ is homeomorphic with $min (B)$.
\end{theorem}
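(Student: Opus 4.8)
The plan is to realize $\mathrm{min}(A)$ and $\mathrm{min}(B)$ as subspaces of $\X{A}$ and $\X{B}$ and to show that the inclusion-induced map $i^*\colon \X{B}\to\X{A}$, $P\mapsto P\cap A$, restricts to a homeomorphism between them. The whole argument rests on two facts already available: by Corollary \ref{4.2}, density of $A$ in $B$ makes $i^*$ a topological embedding of $\X{B}$ onto its image $i^*(\X{B})\subseteq\X{A}$; and by Theorem \ref{3.6}, $i^*$ sends minimal primes of $B$ to minimal primes of $A$ and every minimal prime of $A$ arises this way. So I would not manipulate minimal primes by hand at all, but instead assemble these pieces.

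First I would check that $i^*$ carries $\mathrm{min}(B)$ bijectively onto $\mathrm{min}(A)$. That $i^*(\mathrm{min}(B))\subseteq\mathrm{min}(A)$ and that this inclusion is in fact an equality is exactly the content of Theorem \ref{3.6} (its two directions). Injectivity is automatic, since by Theorem \ref{3.3} the map $i^*$ is already injective on all of $\X{B}$. Hence $i^*|_{\mathrm{min}(B)}\colon \mathrm{min}(B)\to\mathrm{min}(A)$ is a bijection.

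Next I would upgrade this bijection to a homeomorphism by a purely topological observation: the restriction of an embedding to a subspace is again an embedding onto its image. Concretely, $\mathrm{min}(B)$ carries the subspace topology from $\X{B}$, so a typical open set is $U=V\cap\mathrm{min}(B)$ with $V$ open in $\X{B}$; then $i^*(U)=i^*(V)\cap i^*(\mathrm{min}(B))=i^*(V)\cap\mathrm{min}(A)$ using injectivity of $i^*$, and $i^*(V)$ is open in $i^*(\X{B})$ because $i^*$ is open onto its image. Thus $i^*|_{\mathrm{min}(B)}$ is a continuous open bijection onto $\mathrm{min}(A)$ regarded as a subspace of $i^*(\X{B})$; and by transitivity of the subspace topology, that topology on $\mathrm{min}(A)$ coincides with the one inherited directly from $\X{A}$, which is the Zariski topology on the minimal spectrum.

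I do not expect a genuine obstacle here, since all the heavy lifting (injectivity, openness, the structure of minimal primes) was done in Sections 3 and 4; the main thing to be careful about is bookkeeping — keeping track of which topology each of $\mathrm{min}(A)$, $\mathrm{min}(B)$, $i^*(\X{B})$ carries so that the ``restriction of an embedding'' principle applies without circularity, and making sure both implications of Theorem \ref{3.6} are invoked so that the restricted map is onto $\mathrm{min}(A)$ and not merely into it.
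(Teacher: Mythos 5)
Your proposal is correct, and it reaches the conclusion by a genuinely more structural route than the paper. Both arguments use the same map $P\mapsto P\cap A$ and both rely on Theorem \ref{3.6} for bijectivity between $\min(B)$ and $\min(A)$; the difference is in how the topology is handled. The paper verifies continuity and openness by hand at the level of basic open sets of the minimal spectra: continuity via the computation $\varTheta^{-1}(X_a^A)=X_a^B$, and openness by re-running the density argument (choosing $c\notin Q$ with $ac\in A$ and showing $X_{ac}^A\cap\min(A)\subseteq\varTheta(X_a^B\cap\min(B))$, with Theorem \ref{3.6} supplying the reverse inclusion). You instead quote the already established fact that $i^*$ is an embedding of $\X{B}$ onto $i^*(\X{B})$ (Theorem \ref{4.1}, Corollary \ref{4.2}) and observe that an embedding restricts to an embedding of $\min(B)$ onto its image, which by Theorem \ref{3.6} is exactly $\min(A)$; the only delicate point, which you handle correctly, is that $\min(A)\subseteq i^*(\X{B})$ (again by Theorem \ref{3.6}), so transitivity of the subspace topology identifies the topology $\min(A)$ inherits from $i^*(\X{B})$ with its Zariski topology as a subspace of $\X{A}$, and injectivity (Theorem \ref{3.3}) lets images distribute over the intersections $V\cap\min(B)$. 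Your approach buys economy and avoids repeating the density computation; the paper's direct computation is self-contained and has the minor side benefit of exhibiting explicitly how basic open sets of $\min(B)$ map to basic open sets of $\min(A)$, but there is no gap in either argument.
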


\begin{proof}
 The proof follows from the above theorem \ref{3.6}. We therefore define a map $\varTheta : min(B) \to \min (A)$ by $\varTheta (P) = P \cap A$. We shall show that $\varTheta$ is a homeomorphism. That it is bijective follows from theorem \ref{3.6}.

 \

 \textbf{Continuity of $\varTheta$:} Let $X_a^A$ be a basic open set of $min (A)$. We here keep the same notation for basic open sets in $min (A)$. So  $\varTheta^{-1} (X_a^A) = \{ Q \in min (B): Q \cap A \in X_a^A\} = \{Q \in \min (B): a \notin Q\} = X_a^B$. \\

 \textbf{Openness of $\varTheta$:} Let $X_a^B$ be a basic open set in $min (B)$. Let $P \in \varTheta (X_a^A)$. There exists $ Q \in X_a^A$ such that $Q \cap A= P$. There exists a $c \in B$ such that $c \notin Q$ such that $ac \in A$, due to denseness. Our claim is that $P \in X_{ac}^A\subseteq \varTheta (X_a^A)$. As $ a, c \notin Q$ and $Q$ is prime, $ac \notin Q$ and hence $ac \notin P$. So $P \in X_{ac}^A$.

 For the other side, let $Z \in X_{ac}^A$. By theorem \ref{3.6}, there exists unique $T \in min (B)$ such that $T \cap A = Z$. As $ac \notin Z$, $a \notin T$. So $T \in X_a^A$. Hence $\varTheta (T) = Z \in \varTheta (X_a^A)$. Hence $\varTheta (X_a^A)$ is open in $min (A)$

Hence the theorem follows.
\end{proof}

\begin{note} We have however failed to cite any example of weak completely normal ring which is not compleltely normal. We were rather indeed trying to find, whether $A$ is weak completely normal subring with respect to $B$ whenever $A$ is dense in $B$. But we could not crack it and not even traced any counter example so far.
\end{note}

\end{document}